\newcommand {\bd} {\begin{displaymath}}
\newcommand {\ed} {\end{displaymath}}
\newcommand {\be} {\begin{equation}}
\newcommand {\ee} {\end{equation}}
\newcommand {\bea} {\begin{eqnarray}}
\newcommand {\eea} {\end{eqnarray}}
\newcommand {\no} {\noindent}
\newcommand {\la} {\lambda}
\newcommand {\al} {\alpha}
\newcommand {\pt} {\partial}
\newtheorem{lemma}{Lemma}
\newtheorem{thm}{Theorem}
\newtheorem{pro}{Proposition}
\newtheorem{cor}{Corollary}
\newtheorem{exa}{Example}
\newtheorem{rem}{Remark}
\newtheorem{dfn}{Definition}
\begin{document}

\title{  Algebraic Integrability of Lotka-Volterra  \\ equations in three dimensions}

\author{ Kyriacos Constandinides   and Pantelis A.~Damianou}
\address{Department of Mathematics and Statistics\\
University of Cyprus\\
P.O.~Box 20537, 1678 Nicosia\\Cyprus}
\email{kyriacos27@hotmail.com, \ \    damianou@ucy.ac.cy}

\begin{abstract}
 We examine the algebraic complete integrability
of Lotka-Volterra equations in three  dimensions. We
restrict our attention to Lotka-Volterra systems  defined
by a skew symmetric matrix. We obtain  a complete classification
of such systems.  The  classification is obtained using
Painlev$\acute{\rm e}$ analysis and more specifically by  the use
of  Kowalevski exponents.  The imposition of certain integrality
conditions on the Kowalevski exponents gives necessary conditions
for the algebraic integrability of the corresponding systems. We also show
that the conditions are  sufficient.
\end{abstract}

\date{19 September  2009}
%%% ----------------------------------------------------------------------
\maketitle
%%% -----------------------------------------------------

\section{Introduction}
The Lotka-Volterra model is a basic model of predator-prey
interactions. The model was developed independently by Alfred
Lotka (1925), and Vito Volterra (1926).  It forms the basis for
many models used today in the analysis of population dynamics.  In three dimensions it describes the dynamics of
a biological system where three species interact.

 The most general form of Lotka-Volterra equations is
\be \label{l-v} \dot x_i = \varepsilon_i x_i + \sum_{j=1}^n a_{ij} x_i x_j, \ \
i=1,2, \dots , n \ . \ee

 We consider  Lotka-Volterra equations without linear terms ($\varepsilon_i=0$), and where
the matrix of interaction coefficients  $A= ( a_{ij} ) $ is  skew-symmetric. This is a natural assumption
related to the principle that crowding inhibits growth.
 The special case of Kac-van Moerbeke  system  (KM-system) was used to
describe population evolution in a hierarchical system of
competing individuals. The KM-system has close connection with the
Toda lattice.  The Lotka-Volterra equations were studied by many authors in
its various aspects, e.g. complete integrability \cite{boun1}
Poisson and bi-Hamiltonian formulation (\cite{cron}, \cite{damianou1},
\cite{kern}), stability of solutions and Darboux polynomials
(\cite{cair}, \cite{moul}).

In this paper we examine the algebraic complete integrability
of such  Lotka-Volterra equations in three  dimensions.
The basic
tools for the required classification are, the use of
Painlev\'e  analysis, the examination of the
eigenvalues of the Kowalevski matrix and other standard Lax pair
and Poisson techniques.  The  Kowalevski exponents are useful  in establishing integrability or non-integrability
 of Hamiltonian systems;  see \cite{adler}, \cite{pol},  \cite{bor},   \cite{gor1}, \cite{gor2}, \cite{kozlov},  \cite{yosh}.
 The first step is to impose certain  conditions on the
exponents, i.e., we  require that  all the Kowalevski exponents be
integers. This gives a finite list of  values of the parameters
satisfying such conditions. This  step requires some elementary
number theoretic techniques as is usual with such type of
classification. In the three-dimensional case the general
expressions for the Kowalevski exponents are rational and
therefore the number theoretic analysis is manageable.

 The second step is to check that the leading behavior of the
Laurent series solutions agrees with the weights of the
corresponding homogeneous vector field defining  the dynamical
system. In our case the weights are all equal to one and therefore
we must exclude the possibility that some of the Laurent series
have leading terms with poles of order greater than one. This is a
step usually omitted by some authors due to its complexity, but in
this paper we analyse  this in detail.
To accomplish this step we use old-fashioned Painlev\'e
 Analysis, i.e., Laurent series.
 The application of Painlev$\acute{\rm e}$ analysis and
especially of the ARS algorithm (see \cite{adrian} \cite{ars1}, \cite{ars2},
\cite{boun1}, \cite{boun2}, \cite{gor2}) is useful in
calculating the Laurent solution of a system and check if there
are ($n-1$) free parameters.

  In performing Painlev\'e  analysis we use the fact that the sum of the variables is
always a first integral. Surprisingly the  Painlev\'e
analysis does not reveal any additional cases besides the ones
already found by using the Kowalevski exponents. In this
classification of the algebraic completely integrable
Lotka-Volterra systems  we discover,
as expected, some well known integrable systems like the open and
periodic Kac-van Moerbeke systems.

\no To make sure that our conditions are not only necessary but
also sufficient we verify  that the systems obtained are indeed
algebraically completely integrable by checking the number of free
parameters.  We also have to point out that our classification is
up to isomorphism. In other words, if one system is obtained from
another by an invertible linear change of variables, we do not consider
them as different. Modulo  this identification we obtain only  six
classes of solutions.

The Lotka-Volterra system can be expressed in hamiltonian form as follows:
Define a quadratic Poisson  bracket by the formula

\bd \{ x_i,x_j\} = a_{ij} x_i x_j, \ \ i,j =
1,2, \dots , n  \ . \ed
Then the system can be written in the form
 $\dot x_i = \{x_i, H\}$, where $H=\sum_{i=1}^n  x_i$. The Louville
integrability in the three-dimensional case can be easily established.
In addition to the Hamiltonian function $H$, there exists a second intergral, in fact a Casimir $F$.  The  formula for this Casimir is given afterwards.   We
have to point out that in general algebraic integrability does not imply Liouville
integrability  and vice versa.

In this paper we restrict our attention  to the  three dimensional case. For
$n=3$  the system is defined by the matrix
\begin{eqnarray*}
A=\left(%
\begin{array}{ccc}
  0  &  a & b \\
  -a &  0 & c \\
  -b & -c & 0 \\
\end{array}%
\right) \ ,
\end{eqnarray*}
where $a,b,c$ are  constants. We  use the notation $(a,b,c)$ to denote this system. It turns out that the algebraically integrable Lotka-Volterra systems fall either into two infinite families or four exceptional cases:
\begin{thm}
The Lotka-Volterra equations in three dimensions are algebraically complete integrable if and only if $(a,b,c)$ is in the class of

\smallskip
\noindent
$(l_2)$ \ \ \  $(1,0,1)$

\smallskip
\noindent
$(l_3)$ \ \ \  $(1,-1,1)$

\smallskip
\noindent
$(l_4)$  \ \ \  $(1,-1,2)$

\smallskip
\noindent
$(l_6)$ \ \ \  $(1,-2,3)$

\smallskip
\noindent
$(l_{\lambda})$ \ \ \  $( 1,1, \lambda) $  \ \ $\lambda \in {\bf Z}\setminus{0}$.

\smallskip
\noindent
$(l_0)$  \ \ \   $(1, 1+\mu, \mu )$ \ \ $\mu \in {\bf R} \setminus {0}$.

\end{thm}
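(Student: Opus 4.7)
The strategy combines a Kowalevski/Painlev\'e argument for necessity with explicit constructions for sufficiency; I treat two triples as the same when related by a coordinate permutation and a global rescaling $A\mapsto\la A$ (a time reparametrisation), which is the ``up to isomorphism'' in the statement. The first step is to insert a Laurent ansatz $x_i(t)=c_i\,t^{-1}+O(1)$ into $\dot x_i=x_i\sum_j a_{ij}x_j$ and extract the indicial relations $\sum_j a_{ij}c_j=-1$ on the support of $c$. Because $A$ is skew and $3\times 3$, it is singular, so besides the principal balance (supported on all three indices, existing only on the locus where $(-1,-1,-1)\in\mathrm{im}\,A$) there are non-principal balances supported on two indices which must be analysed separately. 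For each admissible balance I form the Kowalevski matrix
\[
K_{ij}=\delta_{ij}+\frac{\pt f_i}{\pt x_j}\bigg|_{x=c},\qquad f_i(x)=x_i\sum_j a_{ij}x_j,
\]
and read off its three eigenvalues as rational functions of $(a,b,c)$. One eigenvalue is always $-1$ (translation of the pole); a second comes from the fact that $H=x_1+x_2+x_3$ is a weight-$(-1)$ first integral and is a non-negative integer on each balance; the third is a genuine rational function of $(a,b,c)$ whose integrality provides the main constraint.

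The second step is elementary number theory: I set the non-trivial Kowalevski exponent equal to every integer value it can take and solve the resulting Diophantine conditions. After normalising $a=1$ within each orbit and running through a finite case analysis, the admissible parameter loci collapse to the four exceptional triples $(1,0,1),\,(1,-1,1),\,(1,-1,2),\,(1,-2,3)$, the discrete family $(1,1,\la)$ with $\la\in{\bf Z}\setminus\{0\}$, and one continuous family $(1,1+\mu,\mu)$, $\mu\in{\bf R}\setminus\{0\}$, on which the third exponent is formally independent of $\mu$. Each surviving candidate is then fed into the ARS algorithm: writing $x_i=t^{-1}\sum_{k\ge 0}c_i^{(k)}t^{k}$, I solve the recursion order by order and check that at every non-negative resonance the linear system for $c^{(k)}$ is consistent (no forced logarithms), and that the total number of free coefficients equals $n-1=2$. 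In the same pass I rule out all balances with a leading pole of order $\ge 2$; since the weights of the vector field are all equal to one, such a balance would destroy algebraic integrability, and this is the step most often skipped in the literature that establishes the list above is really exhaustive rather than a first approximation.

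Sufficiency is then handled case by case: for each of the six classes I would produce the Casimir $F$ of the quadratic bracket $\{x_i,x_j\}=a_{ij}x_ix_j$, whose monomial form is dictated by a generator of $\ker A$, and, in the Kac--van Moerbeke cases $(1,0,1)$ and $(1,-1,1)$ and their cousins, write a classical Lax pair to certify that the generic common level set of $H$ and $F$ is completed by the Laurent flow into an abelian surface, in the Adler--van Moerbeke sense of algebraic complete integrability. The main obstacle is the Laurent/Painlev\'e step rather than the Kowalevski one: on the continuous family $(1,1+\mu,\mu)$ one must verify the ARS compatibility conditions without the help of a finite-dimensional Diophantine restriction, and, for every $(a,b,c)$ outside the listed classes, one has to show that at least one balance produces either a non-integer exponent or a resonant obstruction, so that algebraic integrability genuinely fails.
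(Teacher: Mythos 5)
Your overall architecture --- indicial locus, Kowalevski matrix, integrality of the exponents, a Diophantine case analysis, a free-parameter count via the Laurent/ARS recursion, and a separate exclusion of higher-order poles --- is exactly the paper's. However, the plan as written is missing the actual content of its hardest step. You dismiss balances with leading pole order $\ge 2$ with the remark that ``since the weights of the vector field are all equal to one, such a balance would destroy algebraic integrability.'' That is not an argument: the weight vector $(1,1,1)$ does not by itself forbid Laurent solutions with $\nu_i\ge 2$, and the danger is precisely that some $(a,b,c)$ outside the list could admit a two-parameter Laurent family with higher-order poles, i.e.\ be a.c.i.\ in a way invisible to the order-one indicial analysis. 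The paper devotes all of Section 6 to this. Conservation of $H=x_1+x_2+x_3$ reduces the possibilities to a few patterns of equal maximal pole orders; the patterns in which exactly two variables share the maximal order are eliminated by coefficient matching (one is forced to $a=0$, then $b=c$, then $x_1=-x_2$ and $x_3$ constant, a contradiction); and the pattern $\nu_1=\nu_2=\nu_3=\nu>1$ forces $b=a+c$, after which the paper integrates the reduced system \emph{explicitly in closed form} and checks that the resulting higher-order-pole solutions carry only one free constant rather than two. None of these ideas --- in particular the explicit integration on the locus $b=a+c$ --- appears in your plan, and without them the exhaustiveness of the classification is not established.

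A second, smaller misalignment concerns sufficiency. The theorem is stated for the paper's weaker definition of a.c.i.\ (Laurent solutions with pole orders $v_i=1$ and $n-1=2$ free parameters), so the free-coefficient count you mention is already the whole sufficiency argument, and it is all the paper does (Table 3). Your additional proposal to certify each case by completing the common level set of $H$ and the Casimir $F=x_1^{c}x_2^{-b}x_3^{a}$ into an abelian surface cannot work uniformly: on the family $(1,1+\mu,\mu)$ with $\mu$ irrational, $F$ is not a polynomial, so the Adler--van Moerbeke machinery does not apply, yet the system is a.c.i.\ in the sense actually used. Keep the Laurent count as the sufficiency criterion and treat Lax pairs as illustration only (as the paper does for the periodic Kac--van Moerbeke case). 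Also, a minor slip: $H$ has weight $+1$, not $-1$, and the corresponding Kowalevski exponent on each nontrivial balance is exactly $1$.
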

 We use the notation $l_j$ to indicate that the system has an invariant of degree $j$ or equivalently that the largest Kovalevski exponent is $j$.

In Section 2 we give the basic definitions of weight-homogeneous vector fields and the Kowalevski matrix. The definition of Kowalevski exponents and  relevant  results follow the recent  book \cite{pol}.
  See also  the  review article of Goriely \cite{gor1}  where one  can find many more
properties of these exponents. In Section 3 we give some related  properties of   the Kowalevski
exponents and a criterion of algebraic complete integrability. In Section 4 we define the three dimensional Lotka-Volterra systems and find necessary conditions for their algebraic integrability by analyzing the corresponding Kowalevski exponents.  In Section 5 we show that our classification is complete. Finally, in Section 6 we exclude any solutions that may exist due to higher order poles.

\section{Basic definitions}

\no We begin by  defining  what is a weight
homogeneous polynomial. We follow the notation from \cite{pol}.

\vspace{5mm}

\begin{dfn}

\noindent A polynomial $f \in$ {\bf C} $ \left[ x_1,x_2, \dots ,
x_n \right]$ is called a \emph{ weight-homogeneous polynomial of
weight $k$} with respect to a vector $v=\left( v_1,v_2, \dots ,
v_n \right)$ if
\begin{displaymath}
f(t^{v_1}x_1, \dots , t^{v_n}x_n)= t^k f(x_1,x_2, \dots , x_n) \ .
\end{displaymath}
The vector $v$ is called the \emph{weight vector}.  The $v_i$ are all positive integers without a commont divisor. The weight $k$
is denoted by $\varpi(f)$.
\end{dfn}

\begin{dfn}

\noindent A polynomial vector field on {\bf C}$^n$,
\begin{eqnarray} \label{vef}
\begin{array}{c}
\dot x_1 = f_1 (x_1,x_2, \dots , x_n) \\
\vdots \\
\dot x_n = f_n (x_1,x_2, \dots , x_n) \\
\end{array}
\end{eqnarray}
is called a  \emph{ weight-homogeneous vector field} of weight $k$
(with respect to a weight vector $v$), if
$\varpi(f_i)=v_i+k=\varpi(x_i)+k$ for $i=1,2, \dots , n$. A
weight-homogeneous vector field of weight 1 is called \emph{
weight-homogeneous vector field}. Furthermore, when all the
weights are equal to 1, this is simply called a \emph{ homogeneous
vector field}.
\end{dfn}

\begin{exa}
 We consider the periodic 5-particle Kac-van Moerbeke lattice
that is given by the quadratic vector field
\begin{eqnarray} \label{vefex}
\dot x_i = x_i (x_{i-1} - x_{i+1}), \ \ \ \ i=1, \dots , 5,
\end{eqnarray}
with $x_i = x_{i+5}$. This system has three independent  constants of motion,
\begin{eqnarray} \label{cons}
\begin{array}{ccl}
F_1 &=& x_1 + x_2 + x_3 + x_4 + x_5, \\
F_2 &=& x_1 x_3 + x_2 x_4 +x_3 x_5 +x_4 x_1 + x_5 x_2, \\
F_3 &=& x_1 x_2 x_3 x_4 x_5. \\
\end{array}
\end{eqnarray}

 Taking $v=(1,1,1,1,1)$, (\ref{vefex}) becomes a
homogeneous vector field and the weights of the integrals of
motion  are $\varpi (F_1)$=1, $ \varpi (F_2) = 2 \
{\rm and} \ \varpi (F_3) = 5.$
\end{exa}

\vspace{3mm}

We now give the definition of an algebraically completely integrable system following
\cite{gor1,gor2}. Note that this definition differs from the one given in \cite{pol}. However, the definition  in  \cite{pol} implies the definition of this paper.
\vspace{3mm}

\begin{dfn}

 A vector field,
\begin{eqnarray*}
\begin{array}{c}
\dot x_1 = f_1 (x_1,x_2, \dots , x_n) \\
\vdots \\
\dot x_n = f_n (x_1,x_2, \dots , x_n), \\
\end{array}
\end{eqnarray*}
is called an  \emph{ algebraically completely integrable system}
(a.c.i.) if its solution can be expressed as Laurent series
\begin{displaymath}
x_i (t)= \frac{1}{t^{v_i}} \sum_{k=0}^{\infty} x_i^{(k)} t^k, \ \
\ \ i=1,2, \dots , n,
\end{displaymath}
\no where $n-1$ of the coefficients $x_i^{(k)}$ are free
parameters.
\end{dfn}

\subsection{Kowalevski Exponents}

\no The following Proposition is important for two reasons. First,  it  gives  an induction formula for finding  the
Laurent solution of a weight-homogeneous vector field and second it  defines  the Kowalevski exponents which is an  important tool for our classification.

\begin{pro} \label{pro}

\noindent Suppose that we have  a weight-homogeneous vector field on
{\rm \bf C}$^n$ given by
\begin{displaymath}
\dot x_i = f_i (x_1, \dots , x_n), \ \ \ \ i=1,2, \dots , n,
\end{displaymath}
and suppose that
\begin{equation} \label{sol}
x_i (t)= \frac{1}{t^{v_i}} \sum_{k=0}^{\infty} x_i^{(k)} t^k, \ \
\ \ i=1,2, \dots , n
\end{equation}
is a weight-homogeneous Laurent solution for this vector field.
Then the leading coefficients, $x_i^{(0)}$, satisfy the non linear
algebraic equations
\begin{eqnarray} \label{indlo}
\begin{array}{c}
v_1 x_1^{(0)} + f_1 (x_1^{(0)}, \dots , x_n^{(0)}) =0, \\
\vdots \\
v_n x_n^{(0)} + f_n (x_1^{(0)}, \dots , x_n^{(0)}) =0, \\
\end{array}
\end{eqnarray}
\noindent while the subsequent terms $x_i^{(k)}$ satisfy
\begin{eqnarray} \label{7.9}
\left( k {\rm Id}_n - {\mathcal{K}} \left( x^{(0)} \right) \right)
x^{(k)} = R^{(k)},
\end{eqnarray}

\noindent where $x^{(k)}= \left( \begin{array}{c}
                                   x_1^{(k)} \\
                                   \vdots \\
                                   x_n^{(k)} \\
                                 \end{array} \right)$
and $R^{(k)}= \left( \begin{array}{c}
                       R_1^{(k)} \\
                       \vdots \\
                       R_n^{(k)} \\
                       \end{array} \right) $.
$R^{(k)}$ is a polynomial, which depends on the variables
$x_1^{(l)}, \dots , x_n^{(l)}$ with $0 \leq l < k$ only. The
elements of the $n \times n$ matrix $\mathcal{K}$ are given by
\begin{eqnarray} \label{kowa}
{\mathcal{K}}_{i,j} := \frac{\partial f_i}{\partial x_j} +v_i
\delta_{ij},
\end{eqnarray}
\noindent where $\delta$ is the Kronecker delta.
\end{pro}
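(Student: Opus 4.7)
The plan is to substitute the Laurent ansatz (\ref{sol}) directly into both sides of $\dot{x}_i=f_i(x)$ and match coefficients of powers of $t$, using the weight-homogeneity of $f_i$ to pull out a common factor and reduce the computation to a Taylor expansion around the leading term $x^{(0)}$.

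First, I would compute the left-hand side: writing $x_i(t) = \sum_{k \ge 0} x_i^{(k)} t^{k-v_i}$ gives $\dot{x}_i(t) = \sum_{k \ge 0} (k-v_i)\, x_i^{(k)}\, t^{k-v_i-1}$. For the right-hand side, I would introduce the regular power series $\xi_j(t) := \sum_{k \ge 0} x_j^{(k)} t^k$, so that $x_j(t) = t^{-v_j}\xi_j(t)$. Since the vector field has weight $1$, we have $\varpi(f_i) = v_i+1$, and by definition of weight-homogeneity
\[
f_i(t^{-v_1}\xi_1, \dots, t^{-v_n}\xi_n) \;=\; t^{-(v_i+1)}\, f_i(\xi_1, \dots, \xi_n).
\]
Thus the RHS becomes $t^{-(v_i+1)}\, f_i(\xi(t))$, where $f_i(\xi(t))$ is a polynomial in the $\xi_j(t)$ and hence a convergent (in fact polynomial-in-each-$t^k$-coefficient) Taylor series in $t$ around $t=0$, starting from $f_i(x^{(0)})$.

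Next I would match coefficients of $t^{k-v_i-1}$. For $k=0$, only the constant term $f_i(x^{(0)})$ of the Taylor expansion contributes, and matching gives $(-v_i) x_i^{(0)} = f_i(x^{(0)})$, which is exactly the nonlinear system (\ref{indlo}) determining the leading coefficients. For $k \ge 1$, I would Taylor-expand $f_i(\xi(t))$ around $\xi(0) = x^{(0)}$ and read off the coefficient of $t^k$: the linear-in-$x^{(k)}$ part is $\sum_j \frac{\partial f_i}{\partial x_j}(x^{(0)})\, x_j^{(k)}$, while all remaining contributions involve only products of $x_j^{(l)}$ with $l<k$ (since $\xi_j(t) - x_j^{(0)} = \sum_{l \ge 1} x_j^{(l)} t^l$, and a term $t^k$ coming from a product of factors $t^{l_1}\cdots t^{l_r}$ with $r \ge 2$ forces each $l_i < k$). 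Equating coefficients gives
\[
(k-v_i)\, x_i^{(k)} \;=\; \sum_j \frac{\partial f_i}{\partial x_j}(x^{(0)})\, x_j^{(k)} \;+\; R_i^{(k)},
\]
where $R_i^{(k)}$ is a polynomial in the $x_j^{(l)}$ with $l<k$. Moving the linear-in-$x^{(k)}$ terms to the left and using the definition (\ref{kowa}) of the Kowalevski matrix $\mathcal{K}_{i,j} = \partial f_i/\partial x_j + v_i \delta_{ij}$ yields exactly $(k\,\mathrm{Id}_n - \mathcal{K}(x^{(0)}))\, x^{(k)} = R^{(k)}$, which is (\ref{7.9}).

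There is no real obstacle here beyond careful bookkeeping; the one step that deserves attention is verifying that no term contributing to the coefficient of $t^k$ in $f_i(\xi(t))$ is linear in some $x^{(l)}$ with $l > k$ or nonlinear in $x^{(k)}$ itself, which follows from the observation above that nonlinear contributions arise only from products $t^{l_1}\cdots t^{l_r}$ with $r \ge 2$ and $l_1+\cdots+l_r = k$, forcing each $l_j < k$. Once this is noted, the proposition reduces to equating two explicit Taylor series.
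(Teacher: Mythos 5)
Your proof is correct. The paper states this Proposition without proof (importing it from the reference \cite{pol}), and your argument --- substituting $x_j = t^{-v_j}\xi_j(t)$, using weight-homogeneity to factor out $t^{-(v_i+1)}$, Taylor-expanding $f_i(\xi(t))$ about $x^{(0)}$, and observing that the nonlinear Taylor terms only involve coefficients $x_j^{(l)}$ with $l<k$ --- is exactly the standard derivation of the indicial equation and the recursion $(k\,\mathrm{Id}_n-\mathcal{K}(x^{(0)}))x^{(k)}=R^{(k)}$.
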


\begin{rem} The pole order $v_i$ of $x_i$ in (\ref{sol}) coincides with  the
$i$th component of the weight vector. The number, $v_i$, is not necessarily the
pole order of $x_i$ because some of the $x_i^{(0)}$ that can be
calculated solving (\ref{indlo}) may be equal to zero.
\end{rem}

\begin{dfn}

\no

\noindent The system (\ref{indlo}) is called the \emph{
indicial equation}  and its solution set is called the \emph{
indicial locus} and it is denoted by ${\mathcal{I}}$. The $n \times n$ matrix
${\mathcal{K}}$, defined by (\ref{kowa}), is called the \emph{
Kowalevski matrix} and its eigenvalues are called \emph { Kowalevski
exponents} (a terminology due to Yoshida).
\end{dfn}

\vspace{3mm}

\no A necessary condition for algebraic integrability is that
$n-1$ eigenvalues of $\mathcal{K}$ should be integers. It turns
out that the last eigenvalue is always $-1$. The eigenvector that
corresponds to $-1$ is also known. We have the following  Proposition which  can be found in \cite{pol}.

\vspace{3mm}

\begin{pro} \label{pro-1}

\no

\noindent For any $m$ which belongs to the indicial locus
${\mathcal{I}}$, except for the trivial element, the Kowalevski
matrix ${\mathcal{K}}(m)$ of a weight homogeneous vector field
always has $-1$ as an eigenvalue. The corresponding eigenspace
contains $(v_1 m_1, \dots , v_n m_n)^T$ as an eigenvector.
\end{pro}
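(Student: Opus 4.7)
The plan is to verify the statement by a direct computation, showing that $\mathcal{K}(m)$ sends the vector $u := (v_1 m_1, \ldots, v_n m_n)^T$ to $-u$. The strategy rests on combining two pieces of information: the indicial equations (\ref{indlo}), which give $f_i(m) = -v_i m_i$, and an Euler-type identity for the $f_i$ that comes from weight-homogeneity.

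First I would extract the Euler identity. Since the vector field is weight-homogeneous (of weight $1$), each component $f_i$ is a weight-homogeneous polynomial of weight $v_i+1$, i.e. $f_i(t^{v_1}x_1,\dots,t^{v_n}x_n) = t^{v_i+1} f_i(x)$. Differentiating in $t$ and setting $t=1$ yields
\[
\sum_{j=1}^n v_j x_j \frac{\partial f_i}{\partial x_j}(x) = (v_i+1)\, f_i(x).
\]
Evaluating at $x=m$ and inserting $f_i(m) = -v_i m_i$ rewrites this as $\sum_j v_j m_j (\partial f_i/\partial x_j)(m) = -v_i(v_i+1)\, m_i$.

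Next I would plug $u$ into $\mathcal{K}(m)$. Using the definition $\mathcal{K}_{ij} = \partial f_i/\partial x_j + v_i \delta_{ij}$ from (\ref{kowa}), the $i$-th component of $\mathcal{K}(m) u$ splits into the Euler-type sum just computed plus a contribution $v_i^2 m_i$ coming from the diagonal correction, giving
\[
(\mathcal{K}(m)u)_i = -v_i(v_i+1)\, m_i + v_i^2 m_i = -v_i m_i = -u_i.
\]
So $u$ is indeed an eigenvector of $\mathcal{K}(m)$ for the eigenvalue $-1$, and the assumption that $m$ is nontrivial ensures $u\neq 0$ so that the eigenvector is genuine.

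There is essentially no real obstacle: once the Euler identity is in hand the rest is one line of algebra. The only conceptual point worth flagging is that the correct weight of $f_i$ is $v_i+1$ and not $v_i$, a fact baked into the very definition of a weight-homogeneous vector field; using the wrong weight would spoil the cancellation between the $-v_i(v_i+1)m_i$ term and the diagonal contribution $v_i^2 m_i$.
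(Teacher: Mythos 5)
Your proof is correct: the Euler identity for the weight-$(v_i+1)$ homogeneous components $f_i$, combined with the indicial equation $f_i(m)=-v_im_i$, gives exactly $\mathcal{K}(m)u=-u$, and the nontriviality of $m$ together with $v_i>0$ guarantees $u\neq 0$. The paper itself only cites \cite{pol} for this proposition without reproducing an argument, and your computation is precisely the standard proof given there, so there is nothing to add.
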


\section{Properties of Kowalevski exponents} \label{sec}

\no In this section we state some properties of  Kowalevski exponents clarifying  the connection with the degrees of the  first integrals. We also   give a
 necessary condition  for a system to be  algebraically  completely integrable. The
following results can be found in \cite{furta, gor1, kozlov, sade,yosh}.

\begin{thm} \label{thm1}

\noindent If the weight-homogeneous system $\dot x = f(x)$ has $k$
independent algebraic first integrals $I_1, \dots , I_k$ of
weighted degrees $d_1, \dots , d_k$ and Kowalevski exponents
$\rho_2, \dots , \rho_n$, then there exists a $k \times (n-1)$
matrix $\mathcal{N}$ with integer entries, such that
\bd
\sum_{j=2}^n {\mathcal{N}}_{ij} \,  \rho_j = d_i, \ \ \ \ \ i = 1,
\dots , k.
\ed
\end{thm}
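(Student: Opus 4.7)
The strategy is to exploit the scaling symmetry of a weight-homogeneous vector field by restricting the first integrals to the Laurent family and reading off the weight relations there. First, I reduce to the weight-homogeneous case: decomposing each $I_i = \sum_w I_{i,w}$ into weight-homogeneous summands, the equation $\dot I_i = 0$ splits according to weight (since the vector field is itself weight-homogeneous), so every $I_{i,w}$ is separately a first integral. The weighted degree $d_i$ equals the weight of the top nonzero component $I_{i,d_i}$, so it suffices to prove the statement for $k$ weight-homogeneous first integrals of weights $d_1, \ldots, d_k$.

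Next, fix a balance $m \in \mathcal{I}$ and consider the Laurent solution
\[
x_i(t;c) = \frac{1}{t^{v_i}} \sum_{k \geq 0} x_i^{(k)}(c)\, t^k,
\]
where $c = (c_2, \ldots, c_n)$ are the $n-1$ free parameters associated with the Kowalevski exponents $\rho_2, \ldots, \rho_n$. An induction on $k$ using the recursion of Proposition~\ref{pro} shows that each $x_i^{(k)}(c)$ is weight-homogeneous of weight $k$ when we declare $c_j$ to have weight $\rho_j$. Weight-homogeneity of the vector field implies that $y_i(t) := \lambda^{v_i} x_i(\lambda t)$ is again a solution of $\dot x = f(x)$, and comparing Laurent expansions identifies this new solution with $x(t; \lambda^{\rho_2} c_2, \ldots, \lambda^{\rho_n} c_n)$. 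Therefore, for a weight-homogeneous first integral $I$ of weight $d$, the quantity $K(c) := I(x(t;c))$ is independent of $t$ and satisfies
\[
K(\lambda^{\rho_2} c_2, \ldots, \lambda^{\rho_n} c_n) = \lambda^d K(c),
\]
which forces every nonzero monomial $c_2^{m_2} \cdots c_n^{m_n}$ of $K$ to satisfy $\sum_{j=2}^{n} m_j \rho_j = d$. Picking any such monomial furnishes an integer row $(\mathcal{N}_{ij})_{j=2}^{n} = (m_2, \ldots, m_n)$ for the matrix $\mathcal{N}$.

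The main obstacle is to show that each $K_i := I_{i,d_i}(x(t;c))$ is not identically zero, since otherwise no monomial is available to define the corresponding row of $\mathcal{N}$. This is where functional independence of $I_1, \ldots, I_k$ enters: for a sufficiently generic indicial point $m$ the Laurent family dominates an open subset of $\mathbb{C}^n$, so any polynomial vanishing identically on the family must itself vanish. Functional independence of the $I_i$ then transfers, after passing to independent polynomial combinations of the top-weight components $I_{i,d_i}$ when two integrals happen to share the same top part, to the non-vanishing of the corresponding $K_i$. Verifying this non-vanishing is the only delicate point; the remainder of the proof is a direct consequence of the weight-scaling identity.
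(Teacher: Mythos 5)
The paper never proves Theorem~\ref{thm1}: it is quoted from the literature (Furta, Goriely, Kozlov, Sadetov, Yoshida), so there is no in-paper argument to compare against. Your skeleton --- the scaling $y_i(t)=\lambda^{v_i}x_i(\lambda t)$, its identification with the reparametrization $c_j\mapsto\lambda^{\rho_j}c_j$ of the free parameters, and the resulting weight-homogeneity in $c$ of $K_i(c)=I_{i,d_i}(x(t;c))$ --- is the standard Adler--van Moerbeke--Vanhaecke argument and is correct as far as it goes.

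The genuine gap is exactly where you flag it, and it is worse than ``delicate'': the whole construction presupposes that the balance carries $n-1$ honest free parameters $c_2,\dots,c_n$ attached to $\rho_2,\dots,\rho_n$. That happens only when every $\rho_j$ is a positive integer and the recursion $(k\,\mathrm{Id}_n-\mathcal{K})x^{(k)}=R^{(k)}$ is unobstructed at each resonant step (otherwise logarithms appear, or the parameter never enters the Laurent series at all). The theorem, however, is stated for an arbitrary weight-homogeneous system: the $\rho_j$ may be negative, irrational or complex. In that generality the convergent Laurent family can degenerate to the bare similarity solution, $K_i$ becomes a constant that the scaling identity forces to vanish (for $d_i\neq 0$), and no monomial is available --- yet the theorem still asserts an integer relation. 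The classical proofs avoid this by working with the formal variational solutions $t^{\rho_j - v}\eta_j$ along $c\,t^{-v}$ and the Taylor expansion of $I_i$ at the indicial point, where the relation $\sum_j m_j\rho_j=d_i$ comes from matching powers of $t$ and the non-vanishing of some coefficient comes from $I_i$ being a nonzero polynomial (Yoshida's case $dI_i(c)\neq 0$, refined by Sadetov, Furta and Goriely when it vanishes --- this is precisely Theorem~\ref{thm3} of the paper). Two smaller points: once you do have a dominant $(n-1)$-parameter family, functional independence of the $I_i$ is not what gives $K_i\not\equiv 0$ --- the mere non-vanishing of the polynomial $I_{i,d_i}$ suffices, since a nonzero polynomial cannot vanish on a Zariski-dense image; invoking independence there is a red herring and a symptom that the hypothesis you are really using is ``the system has a principal balance'', which is not among the theorem's assumptions. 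And your opening reduction to weight-homogeneous summands is fine for polynomial first integrals but needs a separate argument for the \emph{algebraic} integrals the statement allows.
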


\vspace{3mm}

\noindent From this theorem we have the two following corollaries:

\vspace{3mm}

\begin{cor} \label{cor1}

\no

\noindent If the Kowalevski exponents are {\bf Z}-independent,
then there is no rational first integrals.
\end{cor}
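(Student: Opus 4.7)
The plan is to derive a contradiction from Theorem~\ref{thm1}. I would read the hypothesis that $\rho_2,\dots,\rho_n$ are {\bf Z}-independent in the natural enhanced sense: no non-trivial integer linear combination of them is itself an integer (equivalently, $1,\rho_2,\dots,\rho_n$ are linearly independent over {\bf Q}). This interpretation is essentially forced, because the relation produced by Theorem~\ref{thm1} has a nonzero positive integer on the right-hand side rather than zero, so the bare statement ``$\sum n_j \rho_j = 0 \Rightarrow n_j = 0$'' would not by itself yield the corollary.

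For the polynomial case, suppose for contradiction that a non-constant polynomial first integral $I$ exists. Because the vector field is weight-homogeneous, each weight-homogeneous component of $I$ is again a first integral, so we may pick one that is weight-homogeneous of some positive weighted degree $d > 0$ (positivity uses that the $v_i$ are positive integers, hence nonconstant polynomials have strictly positive weighted degree). Theorem~\ref{thm1} applied with $k=1$ then produces integers $n_2,\dots,n_n$, not all zero since $d \neq 0$, with $\sum_{j=2}^{n} n_j \rho_j = d$. This is precisely a non-trivial integer combination of the $\rho_j$ equal to a positive integer, contradicting {\bf Z}-independence.

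For the rational case, suppose $I = P/Q$ is a non-constant rational first integral in lowest terms. Differentiating along the vector field gives $Q\dot{P} - P\dot{Q} = 0$, hence $\dot{P}/P = \dot{Q}/Q =: \lambda$, so $P$ and $Q$ are Darboux polynomials with a common cofactor $\lambda$. Weight-homogeneity of the field lets us select weight-homogeneous components and assume $P$ and $Q$ are themselves weight-homogeneous of positive weighted degrees $d_P, d_Q$. The Laurent-series induction underlying Theorem~\ref{thm1} adapts to Darboux polynomials: evaluating $P$ on the principal balance (\ref{sol}) and tracking the action of the Kowalevski matrix $\mathcal{K}$ produces an integer relation $\sum_{j=2}^{n} n_j \rho_j = d_P \in {\bf Z}_{>0}$, again contradicting {\bf Z}-independence.

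The main obstacle is this last extension: Theorem~\ref{thm1} as stated applies to genuine first integrals, whereas the rational case requires the analogue for Darboux polynomials. The adaptation is standard in the Painlev\'e-analysis literature (compare \cite{gor1, yosh}), but it must be invoked explicitly, since the corollary is asserted for rational, not merely polynomial, first integrals.
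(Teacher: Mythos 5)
Your clarification of what ``{\bf Z}-independent'' must mean is worth making explicit and is consistent with the source \cite{yosh}: since $\rho_1=-1$ is always an exponent (Proposition~\ref{pro-1}), {\bf Z}-independence of \emph{all} the exponents is exactly the statement that no non-trivial integer combination of $\rho_2,\dots,\rho_n$ is an integer, which is what the argument requires. Your polynomial case is correct. However, the paper intends the corollary to be an immediate consequence of Theorem~\ref{thm1}: the hypothesis there is ``algebraic first integrals,'' which already includes rational ones. A non-constant weight-homogeneous rational first integral of weighted degree $d$ therefore directly yields integers ${\mathcal N}_{1j}$ with $\sum_{j}{\mathcal N}_{1j}\rho_j=d\in{\bf Z}$, not all zero when $d\neq 0$, and the contradiction is immediate. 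The detour through Darboux polynomials is not needed, and your closing remark that Theorem~\ref{thm1} does not cover the rational case misreads its hypothesis.

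The detour you do take contains a genuine gap. In the case you actually need (neither $P$ nor $Q$ a first integral, otherwise the polynomial case applies), the common cofactor $\lambda$ is non-zero, and evaluating a weight-homogeneous Darboux polynomial $P$ on the principal balance does \emph{not} give $\sum_j n_j\rho_j=d_P$. Writing $P(x(t))\sim c\,t^{-d_P+r}$ with $r$ the sum of Kowalevski exponents at which the first non-vanishing coefficient appears, the identity $\dot P/P=\lambda(x(t))=\lambda(x^{(0)})t^{-1}+O(1)$ forces $r=d_P+\lambda(x^{(0)})$, and $\lambda(x^{(0)})$ is in general neither zero nor an integer (for $\dot x=x^2$ with $P=\lambda=x$ one has $d_P=1$, $\lambda(x^{(0)})=-1$, $r=0$). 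To repair this you must subtract the relations for $P$ and for $Q$ so that $\lambda(x^{(0)})$ cancels, which returns you precisely to $\sum_j(n_j-n_j')\rho_j=d_P-d_Q$, i.e.\ to Theorem~\ref{thm1} applied to $P/Q$ itself. Finally, note a residual issue common to both routes: a non-constant weight-homogeneous rational integral of weighted degree $0$ produces a relation with right-hand side $0$, which contradicts {\bf Z}-independence only if one knows the integer vector is non-trivial for a non-constant integral; that refinement is part of the full statements in \cite{yosh, sade} but is not explicit in Theorem~\ref{thm1} as quoted here.
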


\begin{cor} \label{cor2}

 If the Kowalevski exponents are {\bf N}-independent,
then there is no polynomial first integrals.
\end{cor}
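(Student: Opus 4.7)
The corollary follows as a direct application of Theorem~\ref{thm1}, mirroring the strategy used for Corollary~\ref{cor1}. My plan is to argue by contradiction: assume a non-constant polynomial first integral exists, feed it into Theorem~\ref{thm1}, and extract a non-trivial ${\bf N}$-linear relation among $\rho_2,\ldots,\rho_n$, contradicting the hypothesis of ${\bf N}$-independence.

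Concretely, suppose $I$ is a non-constant polynomial first integral of weighted degree $d$. Since the weight vector $v$ has positive integer components and $I$ is non-constant, $d$ is a positive integer. Theorem~\ref{thm1} applied with $k=1$ then yields integers $N_2,\ldots,N_n$ with
\[
d \;=\; \sum_{j=2}^n N_j\,\rho_j.
\]
On its own this is the same output as in Corollary~\ref{cor1}; to obtain the present strengthening one must further show that the coefficients $N_j$ may be taken to be non-negative.

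Establishing this non-negativity is the sole substantive step and the main obstacle. It is where polynomiality of $I$ (as opposed to mere rationality) is used: inspecting the proof of Theorem~\ref{thm1}, each $N_j$ arises from matching weighted degrees of monomials appearing in $I$ against the action of the Kowalevski matrix. For a polynomial integrand only monomials with non-negative exponents occur, so no cancellations from denominators can produce negative contributions, and each $N_j$ comes out non-negative. By contrast, for a rational integral one may have $N_j<0$, which is precisely why Corollary~\ref{cor1} needs the stronger ${\bf Z}$-independence. Once $N_j\in{\bf N}$ is in hand, the identity $d=\sum_j N_j\rho_j$ is a non-trivial ${\bf N}$-linear combination of $\rho_2,\ldots,\rho_n$ equal to the positive integer $d$, directly contradicting the assumed ${\bf N}$-independence of the Kowalevski exponents. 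Thus no polynomial first integral can exist. The technical work consists of tracing through the Laurent-series matching argument behind Theorem~\ref{thm1}, as given in \cite{furta, gor1, kozlov, sade, yosh}, and verifying that negative contributions to the weighted degree (which would correspond to poles of $I$) cannot arise when $I$ is polynomial.
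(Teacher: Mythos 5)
Your overall strategy---reduce to the degree relation of Theorem~\ref{thm1} and then upgrade the integer coefficients to non-negative ones using polynomiality---is the right one, and it matches the paper's intent: the corollary is stated there without proof, and the needed non-negativity is precisely the content of Theorem~\ref{thm3}, quoted from \cite{furta,gor1,tsyg}. The gap is in how you obtain that non-negativity, which you yourself identify as the sole substantive step. You assert it by ``inspecting the proof of Theorem~\ref{thm1}'' and arguing that a polynomial has no denominators, so ``no cancellations from denominators can produce negative contributions.'' That is not the actual mechanism, and as written it is an unproven claim rather than an argument. In the Yoshida--Furta--Goriely proof one substitutes the formal Laurent solution, which is a power series in the free parameters $c_2,\dots,c_n$ (the parameter $c_j$ entering at order $t^{\rho_j}$), into a weight-homogeneous integral $F$ of weighted degree $d$; the expansion of $F$ along the solution is a sum of monomials $c_2^{k_2}\cdots c_n^{k_n}\,t^{-d+\sum_j k_j\rho_j}$, and constancy of $F$ forces a surviving monomial with $\sum_j k_j\rho_j=d$. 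The $k_j$ are non-negative because they are exponents in a power series in the free parameters, not because $F$ lacks denominators; for a rational integral it is the expansion of the reciprocal of the denominator along the solution that can produce negative combinations, which is why Corollary~\ref{cor1} needs the stronger ${\bf Z}$-independence. The clean repair is simply to invoke Theorem~\ref{thm3} (after decomposing a general polynomial integral into weight-homogeneous components, each of which is again a first integral, and picking one of positive degree $d$).

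A second, smaller point: a relation $\sum_{j\ge 2}k_j\rho_j=d$ with $k_j\in{\bf N}$ and $d>0$ does not by itself contradict linear independence over ${\bf N}$, since it is not a relation equal to zero. To get the contradiction one must recall $\rho_1=-1$ and rewrite it as $d\,\rho_1+\sum_{j\ge 2}k_j\rho_j=0$, a non-trivial ${\bf N}$-relation among the full set of Kowalevski exponents; your final sentence should be phrased this way (the same reading is implicitly needed for Corollary~\ref{cor1}).
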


We also have the following theorem, see \cite{furta, gor1, tsyg}.
\begin{thm} \label{thm3}

 Suppose that the system (\ref{l-v}) possesses a homogeneous first integral $F_m$ of degree $m$. Then there exists a set of non-negative integers $k_2, \dots, k_n$ such that
\bd
\sum_{j=2}^n k_j \rho_j = m  \qquad  k_2+k_3+ \dots +k_n \le m \ .
\ed
\end{thm}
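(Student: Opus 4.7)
My plan is to substitute the formal Laurent series solution of (\ref{l-v}) into $F_m$ and exploit weight-homogeneity to convert the first-integral property into the required arithmetic relation between $m$ and the Kowalevski exponents. First I would fix a non-trivial element $c = (c_1,\ldots,c_n)$ of the indicial locus $\mathcal{I}$ and form the Laurent series
\[
x_i(t) \;=\; \frac{1}{t^{v_i}}\sum_{k \ge 0} x_i^{(k)}\,t^{k},\qquad x_i^{(0)} = c_i,
\]
supplied by Proposition~\ref{pro}. The recursion $(k\,\mathrm{Id} - \mathcal{K}(c))\,x^{(k)} = R^{(k)}$ determines $x^{(k)}$ from the lower-order coefficients except when $k$ coincides with a Kowalevski exponent $\rho_j$ ($j \ge 2$), in which case a new free parameter $a_j$ enters. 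This produces an $(n-1)$-parameter family $x_i(t;a_2,\ldots,a_n)$ of formal solutions.

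The central step is the weight-homogeneous scaling symmetry $t \mapsto \lambda^{-1}t$, $x_i \mapsto \lambda^{v_i}x_i$, under which (\ref{l-v}) is invariant and the free parameters transform as $a_j \mapsto \lambda^{\rho_j} a_j$, since $a_j$ sits at the level $t^{\rho_j - v_i}$ of the expansion. Substituting the family into $F_m$ and using that $F_m$ is a first integral, the value $F_m(x(t;a))$ is independent of $t$ and hence a polynomial $P(a_2,\ldots,a_n)$; weight-homogeneity of $F_m$ of degree $m$ then makes $P$ itself weight-homogeneous of degree $m$ with respect to the weights $\rho_2,\ldots,\rho_n$. Every monomial $a_2^{k_2}\cdots a_n^{k_n}$ appearing in $P$ therefore has non-negative integer exponents satisfying
\[
\sum_{j=2}^{n} k_j \rho_j \;=\; m,
\]
and the bound $\sum_j k_j \le m$ is immediate because $v_i = 1$ for system (\ref{l-v}), so $F_m$ has total degree $m$ as a polynomial in $x_1,\ldots,x_n$, forcing $P$ to have total degree at most $m$ in the $a_j$'s.

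It remains to guarantee that $P$ is not identically zero for some choice of balance $c \in \mathcal{I}$. If $P$ vanished for every such $c$, then $F_m$ would vanish along every Laurent trajectory in each family; combining these families over the balances in $\mathcal{I}$ (together with the implicit time-translation degree of freedom corresponding to the excluded eigenvalue $-1$) sweeps out a Zariski-open subset of phase space, which would force the polynomial $F_m$ to be identically zero and contradict the hypothesis that it is a non-trivial first integral. Picking a balance at which $P \not\equiv 0$ produces a non-zero monomial, whose exponent vector is the required $(k_2,\ldots,k_n)$.

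The main obstacle is precisely this last non-vanishing step: it is the only place where global information about $F_m$ (its non-triviality as a polynomial on $\mathbb{C}^n$) must be combined with the purely local Painlev\'e data of Proposition~\ref{pro}. The rest of the argument, namely the scaling and weighted-homogeneity bookkeeping that extracts the relation $\sum_j k_j \rho_j = m$ and the degree bound $\sum_j k_j \le m$, is a formal exercise once a balance has been fixed.
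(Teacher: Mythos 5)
First, note that the paper does not actually prove Theorem~\ref{thm3}; it only cites \cite{furta, gor1, tsyg}, so your attempt can only be measured against the standard arguments in those references. Your overall strategy (evaluate $F_m$ on a Laurent family, use the scaling $t\mapsto\lambda^{-1}t$, $x\mapsto\lambda x$ to make the resulting polynomial $P(a_2,\dots,a_n)$ weighted-homogeneous of degree $m$ for the weights $\rho_j$) is a legitimate and classical heuristic, and the scaling bookkeeping is correct as far as it goes. But there is a genuine gap, and it sits exactly where you locate the ``main obstacle.'' Your construction of an $(n-1)$-parameter family with a free parameter entering at level $\rho_j$ presupposes that every $\rho_j$ is a non-negative integer and that the compatibility condition $R^{(\rho_j)}\in\mathrm{Im}(\rho_j\,\mathrm{Id}-\mathcal{K})$ holds (otherwise logarithms are forced and no parameter appears). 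Theorem~\ref{thm3} assumes none of this, and its whole point is the contrapositive use (Corollary~\ref{cor2}: $\mathbf{N}$-independent exponents $\Rightarrow$ no polynomial integral). Precisely in those cases the family has fewer than $n-1$ parameters, the swept-out set is a proper subvariety, and your final non-vanishing step collapses: $P$ may be identically zero on every balance without $F_m$ being zero, so no monomial, hence no exponent vector $(k_2,\dots,k_n)$, is produced. The proof therefore establishes the statement only under the extra hypothesis of a principal balance, not as stated. The references avoid this entirely by a local algebraic argument: if $\nabla F_m(c)\neq 0$ at a point $c$ of the indicial locus, differentiating $\sum_i f_i\,\partial_i F_m=0$ and using Euler's identity shows $\mathcal{K}^T(c)\nabla F_m(c)=m\,\nabla F_m(c)$, so $m$ is itself a Kowalevski exponent; if the gradient vanishes one passes to the first non-vanishing Taylor jet of $F_m$ at $c$, of some order $d\le m$, which is an eigenvector of the induced action of $\mathcal{K}$ on the $d$-th symmetric power, whose eigenvalues are the sums $\sum k_j\rho_j$ with $\sum k_j=d$. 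No Laurent series, convergence, or parameter count is needed.

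Two smaller points. The degree bound $\sum_j k_j\le m$ is not ``immediate'' from $\deg F_m=m$: the coefficients $x_i^{(k)}(a)$ are built recursively from quadratic expressions in lower-order coefficients, so they are polynomials in the $a_j$ of total degree up to $k$ (not $1$), and a degree-$m$ monomial in the $x_i$ could a priori contribute $a$-monomials of total degree exceeding $m$; you need the inductive estimate $\deg_a x_i^{(k)}\le k$ together with $\sum_s n_s=m$ for the contributing products to recover the bound. Finally, the Zariski-density claim for the union of the images of the Laurent families requires convergence of the series and a principal balance; it is exactly the a.c.i.\ situation, which again is not among the hypotheses of the theorem.
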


\no The next theorem which can be found in \cite{pol} gives us a
necessary condition for a system to be algebraically integrable. This criterion can be checked easily simply by computing the  Kowalevski exponents.

\begin{thm} \label{thm2}

\no

\noindent Let $\rho_1=-1$.  A necessary condition for a system of the form
(\ref{vef}) to be algebraically completely integrable is that all the
Kowalevski exponents $\rho_2, \dots, \rho_n$  should be  integers.

\end{thm}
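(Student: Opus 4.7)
The plan is to count the free parameters in the Laurent expansion and read them off directly from the eigenstructure of the Kowalevski matrix. Assume the system is algebraically completely integrable, so by the definition recalled above, $x_i(t) = t^{-v_i}\sum_{k\geq 0} x_i^{(k)} t^k$ is a Laurent solution in which $n-1$ of the coefficients $x_i^{(k)}$ are free parameters. By Proposition~\ref{pro} these coefficients satisfy the linear recursion $(k\,{\rm Id}_n - \mathcal{K}(x^{(0)}))\,x^{(k)} = R^{(k)}$ for $k=1,2,\dots$, where $R^{(k)}$ depends only on $x^{(0)},\dots,x^{(k-1)}$.

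First, I would fix a point $x^{(0)}$ of the indicial locus $\mathcal{I}$ and inspect the recursion step by step. A fresh free parameter can enter the series at step $k$ only if the matrix $k\,{\rm Id}_n - \mathcal{K}(x^{(0)})$ is singular, i.e.\ only if the non-negative integer $k$ coincides with an eigenvalue of $\mathcal{K}(x^{(0)})$; at such a step the indeterminacy is at most $\dim\ker(k\,{\rm Id}_n - \mathcal{K}(x^{(0)}))$, which is bounded by the algebraic multiplicity of $k$ as a Kowalevski exponent. In particular, any Kowalevski exponent that is not a non-negative integer cannot contribute a free parameter at any step of the recursion.

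Next, I would do the bookkeeping. The matrix $\mathcal{K}(x^{(0)})$ is $n\times n$, so the total algebraic multiplicity of its eigenvalues is $n$. By Proposition~\ref{pro-1} one of these is $\rho_1=-1$, corresponding to the time-translation freedom that sits outside the Laurent coefficients. Hence the sum, over all non-negative integers $k$, of the algebraic multiplicities of $k$ in the Kowalevski spectrum is at most $n-1$; since this sum is an upper bound on the number of free parameters supplied by the recursion, and that number is required to be exactly $n-1$, equality must hold. It follows that every remaining eigenvalue $\rho_2,\dots,\rho_n$ must equal some non-negative integer, and in particular lies in ${\bf Z}$.

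The main obstacle I anticipate is the implicit assumption that the compatibility conditions $R^{(k)}\in{\rm Im}(k\,{\rm Id}_n - \mathcal{K})$ hold at every resonant level $k$; if any were to fail, strictly fewer free parameters than the maximum $n-1$ would be produced, contradicting algebraic complete integrability. Handling potential non-diagonalisable Jordan blocks of $\mathcal{K}$ (where the geometric multiplicity is strictly smaller than the algebraic one) is the other delicate point, but the extremal count $n-1$ forces the geometric and algebraic multiplicities to agree on the integer eigenvalues of $\mathcal{K}$, which is exactly what the stated necessary condition requires.
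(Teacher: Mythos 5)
Your argument is correct and is essentially the standard proof of this criterion; the paper itself does not prove Theorem~\ref{thm2} but quotes it from \cite{pol}, where exactly this free-parameter count via the resonance structure of $\bigl(k\,{\rm Id}_n-\mathcal{K}(x^{(0)})\bigr)x^{(k)}=R^{(k)}$ is the underlying mechanism. Two small points worth making explicit: at the level $k=0$ the ``resonance'' is not governed by the linear recursion but by the freedom in choosing $x^{(0)}$ on the indicial locus, whose tangent space at $x^{(0)}$ is precisely $\ker\mathcal{K}(x^{(0)})$ (the Jacobian of the indicial equations is $\mathcal{K}$), so your bookkeeping still applies; and your count actually yields the stronger conclusion that $\rho_2,\dots,\rho_n$ are \emph{non-negative} integers, which of course implies the statement as given.
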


\section{Lotka-Volterra systems}

\subsection{ Hamiltonian formulation}   Consider a   Lotka-Volterra system  of the form
\begin{equation} \label{lotka}
\dot x_j = \sum_{k=1}^n a_{jk} x_j x_k, \ \ {\rm for} \ \ j=1,2,
\dots , n,
\end{equation}
\noindent where the matrix $A=(a_{ij})$ is  constant  and skew symmetric.

 There is a sympletic realization of the system which goes back to Volterra.
 In other words a projection
from {\bf R}$^{2n} \mapsto$ {\bf R}$^n$   from a symplectic space to a Poisson space.
 Volterra defined the variables
$$q_i(t)= \int_0^t u_i(s)ds$$ \no and
$$p_i(t)= \ln (\dot q_i)-\frac{1}{2} \sum_{k=1}^n a_{ik} q_k,$$
\no for $i=1,2, \dots , n$.  Now the number of variables is doubled and Volterra's
transformation is given explicitly  by
 $$x_i=e^{p_i+\frac{1}{2} \sum_{k=1}^n a_{ik} q_k} \ \
{\rm for} \ \ i=1,2, \dots, n.$$

 The Hamiltonian in these coordinates becomes
$$H=\sum_{i=1}^n x_i =\sum_{i=1}^n \dot q_i = \sum_{i=1}^n {\rm \bf
e}^{p_i+ \frac{1}{2} \sum_{k=1}^n a_{ik} q_k}.$$

 The equations (\ref{lotka}) can be written in Hamiltonian form  \bd \begin{array}{ccc}
  \dot q_i =& \frac{\pt H}{\pt p_i} & = \{q_i,H\}, \\
  \dot p_i =& -\frac{\pt H}{\pt q_i} & =\{p_i,H\}, \\
\end{array}\ed
 $i=1,2, \dots, n$, and the bracket $\{\cdot,\cdot\}$ is the
standard symplectic bracket on ${\bf R}^{2n}$:
\bd \{q_i,p_j\}= \delta_{ij}=\left\{%
\begin{array}{ll}
    1, & \hbox{if} \ \ i=j\\
    0, & \hbox{if} \ \ i \neq j\\
\end{array}%
\right.  ,  \ \ i,j=1,2, \dots , n \ ; \ed
  all  other brackets are  zero. The corresponding Poisson bracket in
 $x$ coordinates is  quadratic $$\{x_i,x_j\}=a_{ij} x_i x_j, \ \ i,j=1,2,
\dots , n.$$

\vspace{3mm}

\no Equations (\ref{lotka})  in $x$ coordinates  are obtained by using this Poisson
bracket and the Hamiltonian, $H=x_1+x_2+ \dots +x_n$.

\vspace{3mm}

\subsection{The three-dimensional case}

\noindent

In this paper we restrict our attention  to the  three dimensional case. For
$n=3$  the system is defined by the matrix
\begin{eqnarray} \label{lotmat3}
A=\left(%
\begin{array}{ccc}
  0  &  a & b \\
  -a &  0 & c \\
  -b & -c & 0 \\
\end{array}%
\right) \ ,
\end{eqnarray}
where $a,b,c$ are real constants.

\noindent Using equations  (\ref{kowa})  we obtain  the Kowalevski matrix  \large
\begin{eqnarray} \label{kmat1}
\left(%
\begin{array}{ccc}
  ax_2^{(0)}+bx_3^{(0)}+1 & ax_1^{(0)} & bx_1^{(0)} \\
  -ax_2^{(0)} & -ax_1^{(0)}+cx_3^{(0)}+1 & cx_2^{(0)} \\
  -bx_3^{(0)} & -cx_3^{(0)} & -bx_1^{(0)}-cx_2^{(0)}+1 \\
\end{array}%
\right),
\end{eqnarray}
\noindent where $x^{(0)}= \left( x_1^{(0)}, \ x_2^{(0)}, \
x_3^{(0)} \right)$ is an element of the indicial locus,  i.e.,  a
solution of the simultaneous equation (\ref{indlo}), which in this
case is written as
\begin{eqnarray}
\begin{array}{rcl}
x_1^{(0)} + a x_1^{(0)} x_2^{(0)} + b x_1^{(0)} x_3^{(0)} & = & 0, \\
x_2^{(0)} - a x_1^{(0)} x_2^{(0)} + c x_2^{(0)} x_3^{(0)} & = & 0, \\
x_3^{(0)} - b x_1^{(0)} x_3^{(0)} - c x_2^{(0)} x_3^{(0)} & = & 0. \\
\end{array}
\end{eqnarray}
\noindent In Table 1 we list  the corresponding Kowalevski
exponents for each element of
the indicial locus.

\begin{table}[h]
\begin{center}
\begin{tabular}{|c|c||c|c|}
  \hline
  Vector $x^{(0)}$ & Kowalevski & Vector $x^{(0)}$
  & Kowalevski \\
  & exponents & & exponents\\
  \hline
  \cline{1-4}
  & & & \\
  (0,0,0) & 1,1,1 &
  (0,$\frac{1}{c}$,-$\frac{1}{c}$) & -1,1,$\frac{a-b+c}{c}$ \\
  & & & \\
  ($\frac{1}{b}$,0,-$\frac{1}{b}$) & -1,1,-$\frac{a-b+c}{b}$ &
  ($\frac{1}{a}$,-$\frac{1}{a}$,0) & -1,1,$\frac{a-b+c}{a}$ \\
  & & & \\
  \hline
\end{tabular}
\end{center}
\caption{Kowalevski exponents of 3x3 Lotka-Volterra equations}
\end{table}

\noindent A necessary condition of algebraic integrability is that all the   Kowalevski exponents must be integers. So we have to solve the simultaneous Diophantine
equations

\begin{equation} \label{simul}
\frac{a-b+c}{a}=k_1, \ \ \frac{a-b+c}{c}=k_2, \ \
-\frac{a-b+c}{b}=k_3,
\end{equation}

\noindent where $k_1, \ k_2, \ k_3 \in$ {\bf Z}. The case $b=a+c$
for which $k_1=k_2=k_3=0$ is investigated below. Solving
(\ref{simul}) we find that
\begin{eqnarray} \label{case3}
k_3=\frac{k_1 k_2}{k_1 k_2-k_1-k_2}, \ \ \left\{%
\begin{array}{ll}
    c= \frac{k_1}{k_2}a, & \hbox{$b=\frac{k_1+k_2-k_1 k_2}{k_2}a$ } \\
    a= \frac{k_2}{k_1}c, & \hbox{$b=\frac{k_1+k_2-k_1 k_2}{k_1}c $} \\
\end{array}%
\right.
\end{eqnarray}
\begin{eqnarray} \label{case1}
k_2=\frac{k_1 k_3}{k_1 k_3-k_1-k_3}, \ \ \left\{%
\begin{array}{ll}
    b= -\frac{k_1}{k_3}a, & \hbox{$c=\frac{k_1 + k_3-k_1k_3}{k_3}a$} \\
    a= -\frac{k_3}{k_1}b, & \hbox{$c=\frac{k_1 + k_3-k_1k_3}{k_1}b$} \\
\end{array}%
\right.
\end{eqnarray}
\begin{eqnarray} \label{case2}
k_1=\frac{k_2 k_3}{k_2 k_3-k_2-k_3}, \ \ \left\{%
\begin{array}{ll}
    b=-\frac{k_2}{k_3}c, & \hbox{$a= \frac{k_2 k_3-k_2-k_3}{k_3}c$} \\
    c=-\frac{k_3}{k_2}b, & \hbox{$a= \frac{k_2+k_3-k_2 k_3}{k_2}b$} \\
\end{array}%
\right.
\end{eqnarray}

\noindent  We assume first, that the Kowalevski exponents  are not  zero. We
examine the solution
\bd
k_3=\frac{k_1 k_2}{k_1 k_2-k_1-k_2}, \ \ b=\frac{k_1+k_2-k_1
k_2}{k_2}a, \ \ c= \frac{k_1}{k_2}a.
\ed
\no  We determine the values of  $k_1$ and $k_2$ so that the
fraction,
\begin{equation} \label{frac}
k_3=\frac{k_1k_2}{k_1k_2-k_1-k_2},
\end{equation}
\noindent is an integer. We first consider the case $k_1 k_2-k_1-k_2 \not=0$.

\bigskip
\noindent
{\bf Case I}  Assume  positive values for
both $k_1$ and $k_2$.

 Since
\begin{eqnarray*}
\begin{array}{ccl}
\frac{k_1k_2}{k_1k_2-k_1-k_2}=1+ \frac{k_1+k_2}{k_1k_2-k_1-k_2}

\end{array}
\end{eqnarray*}
it is enough to  to solve the Diophantine equation
\bd
\frac{x+y} {xy-x-y} =z
\ed
for $x$, $y$ positive integers and $z  \in {\bf Z}$.

\begin{lemma}
Let $x, y \in {\bf Z}^{+}$ with $x \le y$. Then
\bd \frac{x+y} {xy-x-y} \in {\bf Z} \ed  if and only if  $(x,y)$ is one of the following:
$(1, \lambda)$, $\lambda \in {\bf Z}^{+}$, $(2,3)$, $(2,4)$, $(2,6)$, $(3,3)$, $(3,6)$, $(4,4)$.

\end{lemma}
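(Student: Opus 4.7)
The plan is to verify the ``if'' direction by direct substitution (checking each of the seven listed pairs gives an integer), and then to prove the ``only if'' direction by bounding $x$ using positivity considerations, followed by a finite case analysis in $x$ and $y$.

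First I would isolate the trivial family $x=1$: here $xy-x-y=-1$, so the quotient is $-(1+y)$, always an integer, accounting for the infinite family $(1,\lambda)$. From here on I assume $x\ge 2$. The denominator $xy-x-y$ vanishes precisely when $\tfrac{1}{x}+\tfrac{1}{y}=1$, i.e.\ only at $(2,2)$, which I would exclude (the fraction is undefined). For all remaining pairs with $2\le x\le y$, the inequality $\tfrac{1}{x}+\tfrac{1}{y}<1$ holds, so the denominator is strictly positive, and hence
\[
z=\frac{x+y}{xy-x-y}
\]
is a positive rational. Demanding $z\in\mathbf{Z}^+$ forces $z\ge 1$, which rearranges to $\tfrac{1}{x}+\tfrac{1}{y}\ge \tfrac12$.

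The key step is extracting a bound on $x$ from this inequality. Since $x\le y$, we have $\tfrac{2}{x}\ge \tfrac{1}{x}+\tfrac{1}{y}\ge \tfrac12$, hence $x\le 4$. It then remains to examine the finitely many cases $x\in\{2,3,4\}$:
\begin{itemize}
\item For $x=2$, rewrite $z=\dfrac{y+2}{y-2}=1+\dfrac{4}{y-2}$, which is an integer iff $y-2$ divides $4$, yielding $y\in\{3,4,6\}$.
\item For $x=3$, the inequality $\tfrac13+\tfrac{1}{y}\ge \tfrac12$ restricts $y$ to $\{3,4,5,6\}$; direct evaluation of $z=\dfrac{y+3}{2y-3}$ at each value shows only $y=3$ and $y=6$ give integers.
\item For $x=4$, the bound forces $y=4$, and one checks $z=1$.
\end{itemize}
Collecting these with the family from $x=1$ reproduces exactly the list claimed in the lemma.

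I do not expect any serious obstacle; the argument is an elementary Diophantine reduction once one observes that positivity of the denominator (for $x\ge 2$, $(x,y)\ne(2,2)$) converts integrality of $z$ into the bound $\tfrac{1}{x}+\tfrac{1}{y}\ge \tfrac12$. The only point requiring a small amount of care is the sign analysis of the denominator, in particular isolating the $x=1$ family (where the denominator is negative) and excluding $(2,2)$ (where it is zero), before the finite case check is carried out.
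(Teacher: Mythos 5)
Your proof is correct and follows essentially the same route as the paper's: both derive the bound $x\le 4$ from the inequality $xy-x-y\le x+y$ and then finish with a finite case check for $x\in\{1,2,3,4\}$. Your version is in fact slightly more careful than the paper's, since you explicitly justify the sign of the denominator (isolating $x=1$ and excluding $(2,2)$) before converting integrality into the bound $\tfrac1x+\tfrac1y\ge\tfrac12$.
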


\begin{proof}
Since
\bd xy-x-y \le x+y \ed
we have
\bd xy \le 2(x+y) \le 4y \ . \ed
Since $y \not=0$  we get $x\le 4$.  Therefore $x=1,2,3,4$.  We examine each case separately.
\begin{itemize}
\item   If $x=1$
\bd \frac{x+y} {xy-x-y} =\frac{1+y}{-1}=-1-y \in {\bf Z} \ . \ed
Therefore $(1 , \lambda)$, $\lambda  \in {\bf Z}^{+}$  is always a  solution.
\item
Suppose $x=2$.   Then
\bd \frac{x+y} {xy-x-y} =\frac{2+y}{y-2} = 1 + \frac{4} {y-2} \ed
should be an integer. Therefore $y-2=\pm 1, \pm 2, \pm 4 $.  We obtain the solutions $(2,3)$, $(2,4)$ and $(2,6)$. \item
Suppose $x=3$. Then
\bd \frac{x+y} {xy-x-y} =\frac{y+3}{2y-3} \ed
should be an integer.   Therefore
\bd 2y-3 \le y+3 \ed
and we obtain $y \le 6$.  We obtain the solutions $(3,3)$ and $(3,6)$.
\item
Suppose $x=4$. Then
\bd \frac{x+y} {xy-x-y} =\frac{y+4}{4y-4} \ed
should be an integer.   Therefore
\bd 3y-4 \le y+4 \ed
and we obtain $y \le 4$.  We obtain the solution $(4,4)$.

\end{itemize}

\end{proof}
Of course, since the fraction
\bd \frac{x+y} {xy-x-y}  \ed
is symmetric with respect to $x$ and $y$, we easily obtain all solutions in positive integers.

We summarize:

\begin{eqnarray}
{\rm For} \ \ 1 \leq k_1 \leq k_2 \ \ \left\{%
\begin{array}{ll}
    k_1=1 \ \ \Longrightarrow & \hbox{$ k_2=\lambda  \in {\bf Z}^{+}$} \\
    k_1=2 \ \ \Longrightarrow & \hbox{$k_2 \in \{ 3,4,6 \}$} \\
    k_1=3 \ \ \Longrightarrow & \hbox{$k_2 \in \{ 3,6 \}$} \\
    k_1=4 \ \ \Longrightarrow & \hbox{$k_2=4$}. \\
\end{array}%
\right.
\end{eqnarray}
Note that the case $k_1=3$, $k_2=3$ implies $k_3=3$ and we obtain the periodic KM-system $(1,-1,1)$.

\bigskip
\noindent
{\bf Case II}

\noindent Suppose  one of them, say $k_1$, is positive while   the other, $k_2$,  is negative. Let  $k_2=-x, \ x>0$. Then
$$ k_3=\frac{-k_1x}{-k_1x-k_1+x}=\frac{k_1x}{k_1x+k_1-x}=1+\frac{x-k_1} { k_1 x +k_1-x} $$
\noindent

It  is enough to  to solve the Diophantine equation
\bd
\frac{x-y} {xy+y-x} =z
\ed
for $x$, $y$ positive integers and $z  \in {\bf Z}$.

\begin{lemma}
Let $x, y \in {\bf Z}^{+}$.  Then
\bd \frac{x-y} {xy+y-x} \in {\bf Z} \ed  if and only if  $(x,y)$ is of the form
$( \lambda,1 )$ or $(\lambda, \lambda)$ with $\lambda \in {\bf Z}^{+}$.

\end{lemma}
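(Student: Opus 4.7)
The plan is to mimic the strategy of the previous lemma: rewrite the divisibility condition as a size inequality between numerator and denominator, and then case-split on the sign of the numerator. Throughout, it is crucial that the denominator $xy+y-x = y(x+1)-x$ is strictly positive whenever $x,y \in \mathbb{Z}^{+}$, since $y(x+1) \geq x+1 > x$. This lets me freely compare magnitudes without worrying about a vanishing or sign-ambiguous denominator.

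First I would dispose of the easy direction by direct substitution: for $(x,y)=(\lambda,1)$ the fraction collapses to $\tfrac{\lambda-1}{1}=\lambda-1$, and for $(x,y)=(\lambda,\lambda)$ the numerator vanishes, giving $0$. Both are integers. For the converse, set $z=\tfrac{x-y}{xy+y-x}\in\mathbb{Z}$ and split into three cases according to the sign of $x-y$.

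The case $x=y$ gives $z=0$ automatically, producing the family $(\lambda,\lambda)$. The case $x<y$ forces $z$ to be a negative integer, so $-z\geq 1$, i.e.\ $y-x\geq xy+y-x$, which simplifies to $xy\leq 0$ — impossible for positive integers. Hence no solutions arise here. In the case $x>y$ the integer $z$ is positive, so $z\geq 1$, which means $xy+y-x\leq x-y$, i.e.
\bd
y(x+2) \;\leq\; 2x,
\ed
so $y\leq \tfrac{2x}{x+2}=2-\tfrac{4}{x+2}<2$. Therefore $y=1$, and substituting back yields $z=x-1$, an integer for every $x\in\mathbb{Z}^{+}$. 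This recovers exactly the family $(\lambda,1)$.

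There is no real obstacle here; the only mild subtlety is confirming that the denominator never vanishes so that the comparison $|\text{numerator}|\geq |\text{denominator}|$ (needed for a nonzero integer quotient) is legitimate. Unlike the previous lemma, the symmetry $x\leftrightarrow y$ is broken (the expression changes sign under it rather than being invariant), so I would not reduce to $x\leq y$; instead I would exploit the asymmetry directly via the sign split above, which is what makes the classification so short.
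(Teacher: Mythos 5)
Your proof is correct and rests on the same elementary idea as the paper's: a nonzero integer value of the fraction forces the (always positive) denominator $xy+y-x=y(x+1)-x$ to be no larger than $|x-y|$, which immediately pins down $y$. The paper organizes the cases a little differently (it treats $y=1$ first and then, for $y>1$, uses the rewriting $\tfrac{xy}{xy+y-x}=1+\tfrac{x-y}{xy+y-x}$ to deduce $y\le x$ before showing the fraction lies strictly between $0$ and $1$ when $1<y<x$), but your direct sign split on $x-y$ is equivalent and arguably cleaner; incidentally, your value $\lambda-1$ at $(\lambda,1)$ corrects a small slip in the paper, which records that value as $x$.
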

\begin{proof}
If $y=1$  then
\bd \frac{x-y} {xy+y-x}=x \in {\bf Z} \ . \ed
Therefore a pair of the form $( \lambda,1 )$ is always a solution.

Assume $y >1$.  We note that
\bd \frac{xy}{xy+y-x}=1 +\frac{x-y} {xy+y-x} \ed
and therefore $xy+y-x \le xy$  implies $y \le x$.
If $x=y$ then our fraction is clearly an integer.   On the other hand,  if  $y <x$, then  the fraction
\bd \frac{x-y} {xy+y-x} \notin {\bf Z} \  \ed
since
\bd (y-1)x+y \ge x+y > x-y \ . \ed
\end{proof}

If $k_1=1$ then $k_2=-\lambda$ and $k_3=\lambda$.  Similarly,  if $k_1=\lambda$ then $k_2=-\lambda$ and $k_3=1$. The two cases are isomorphic and correspond to Case 5 in Table 3.

\bigskip
\noindent
{\bf Case III}

 If we take negative values for both $k_1$ and $k_2$,
then $$k_3=\frac{xy}{xy+x+y}=1-\frac{x+y}{xy+x+y},$$ \noindent
where $k_1=-x$ and $k_2=-y$ with $x,y>0$. We have that $xy>0$ and
$xy+x+y>0$ so that the Kowalevski exponent is an integer if
$$xy+x+y \leq x+y $$     which implies  $ xy \leq 0$, a contradiction. Therefore, in this case $k_3$ cannot be an integer.

 This completes the analysis of the case  $k_1k_2-k_1-k_2 \neq 0$.

 Now suppose $k_1k_2-k_1-k_2 = 0$.

In this case we have $k_1+k_2=k_1 k_2$ and obviously  (since  we assume non-zero Kowalevski exponents) we  must have
$k_1=k_2=2$. We easily obtain $a=c$ and $b=0$. This  system is equivalent to the open KM-system (also known as the Volterra lattice). This is Case 1 in Table 3.

 This concludes our  analysis.  We have obtained necessary conditions for the algebraic integrability of Lotka-Volterra systems in three dimensions and the results are summarized in Table 2. In Table 3 we also include the case of a zero exponent i.e. $b=a+c$. Note that the case $b=a+c$  which is equivalent to $(1, 1+\mu, \mu)$ for $\mu  \in {\bf R}\setminus {0}$ was also considered   in \cite{boun1} from a different point of view.

\subsection{Equivalence}

\noindent

In order to have a more compact classification, we define an equivalence between two Lotka-Volterra systems.
To begin with,  common factors can be removed. In other words,
suppose  that  matrix $A= \left( a_{ij} \right) $ in (\ref{lotka})  has a common factor $a$.  Precisely, if
$$a_{ij}=C_{ij} a, \ \ {\rm where} \ C_{ij} \in {\rm {\bf R}}, \ \
\ i,j=1,2, \dots ,n,$$

\noindent then the Lotka-Volterra system (\ref{lotka}) can be
simplified  to
$$\dot u_i=\sum_{j=1}^n C_{ij} u_i u_j, \ \ \ i=1,2 \dots ,n,$$

\noindent using the transformation \bd
u_i =a  x_i, \ \ \ i=1,2 \dots ,n. \ed

More generally,   we consider two systems to be isomorphic if there exists
an invertible linear transformation mapping one to the other. Special
cases of isomorphic systems are those that are obtained  from a
given system by applying a permutation of the coordinates. Let  $\sigma \in S_n$, and define a transformation
$$X_i \longmapsto x_{\sigma (i)}, \ \ \ i=1,2, \dots ,n.$$

  The transformed system is then considered equivalent to the original system.  We
illustrate with an  example for $n=3$.

\begin{exa} We prove that the system

\bd
\begin{array}{rclcrcl} \label{iso1}
  \dot x_1 & = & ax_1x_2 -\frac{a}{3} x_1x_3 & & \dot x_1 & = & 3x_1x_2 -x_1x_3 \\
  \dot x_2 & = & -ax_1x_2 + \frac{2a}{3} x_2x_3 & \overrightarrow{ \ \ u_i =a  x_i
  \ \ } &
  \dot x_2 & = & -3x_1x_2 + 2x_2x_3 \\
  \dot x_3 & = & \frac{a}{3}x_1x_3 -\frac{2a}{3} x_2x_3 & & \dot x_3 & = & x_1x_3 -2x_2x_3\\
\end{array}
\ed

\noindent is isomorphic to the system

\bd
\begin{array}{rclcrcl} \label{iso2}
  \dot x_1 & = & ax_1x_2 -2a x_1x_3 & & \dot x_1 & = & x_1x_2 -2x_1x_3 \\
  \dot x_2 & = & -ax_1x_2 + 3a x_2x_3 & \overrightarrow{ \ \ u_i =a  x_i
  \ \ } &
  \dot x_2 & = & -x_1x_2 + 3x_2x_3 \\
  \dot x_3 & = & 2ax_1x_3 -3a x_2x_3 & & \dot x_3 & = & 2x_1x_3 -3x_2x_3\\
\end{array}
\ed

\noindent Applying $\sigma = (1 \ 3 \ 2)$  we have that

\bd
\begin{array}{rccc}
  \dot X_1 = \dot x_{\sigma (1)} = \dot x_3 = & x_1x_3 -2x_2x_3 & =
  & X_2X_1 -2 X_3X_1 \\
  \dot X_2 = \dot x_{\sigma (2)} = \dot x_1 = & 3x_1x_2 -x_1x_3 & =
  & 3X_2X_3 -X_2X_1 \\
  \dot X_3 = \dot x_{\sigma (3)} = \dot x_2 = & -3x_1x_2 +2x_2x_3 & =
  & -3X_2X_3 +2 X_3X_1 \nonumber\\
\end{array},
\ed

\noindent which is the second  vector field.
\end{exa}

\begin{exa}
Note that the  system
\bea \begin{array}{rcl}
      \dot x_1 & = & -x_2x_3 \\
      \dot x_2 & = & x_2x_3 \\
      \dot x_3 & = & x_1x_3-x_2x_3-x_3^2 \\
    \end{array} \nonumber \eea
is equivalent to the open KM-system $(1,0,1)$ under the transformation
\bd (x_1, x_2, x_3) \to (x_2+x_3, x_1, x_2) \ed
but it is not a Lotka-Volterra system.

\end{exa}
\no In Table $2$  we display  the different values of $ \left( a, b, c
\right) $ of the solutions (\ref{case3}), (\ref{case1}) and
(\ref{case2}) of the simultaneous equations (\ref{simul}) which ensure integer Kowalevski exponents for the Lotka-Volterra system
 in three dimensions. We also list the elements of the symmetric
group $S_3$ which realize the  isomorphism. Note that $\lambda \in {\bf Z}\setminus{0}$. The final six  non-isomorphic systems are displayed in Table $3$.

\vspace{2mm}

\begin{table}[h]
\begin{center}
\begin{tabular}{|c||c||l|}
  \hline
  % after \\: \hline or \cline{col1-col2} \cline{col3-col4} ...
  Vector ($a, \ b, \ c $) &  Kowalevski & $\ \ \ \ \ \ \sigma$ \\
  & exponents & \\
  \hline
  \cline{1-3}
   $\left(a,\frac{a}{\lambda},\frac{a}{\lambda}\right)$ & $-1,1,1$ & \\
   $\left(a,a,\lambda a\right)$ & $-1,1,\lambda$ &  $\sigma = (1 \ 3)$ \\
   $\left(a,\lambda a,-a\right)$& $-1,1,-\lambda$ & $\sigma = (2 \ 3)$ \\
  \hline
   $\left(a,-\frac{a}{2},\frac{a}{2}\right)$ & $-1,1,2$ & \\
   $\left(a,-a,2a\right)$ & $-1,1,4$ & $\sigma = (1 \ 3)$ \\
   $\left(a,-2a,a\right)$ & & $\sigma = (1 \ 3 \ 2)$ \\
  \hline
   $\left(a,-a,a\right)$ & $-1,1,3$ & \\
  \hline
   $\left(a,-\frac{a}{3},\frac{2a}{3}\right)$ & & $\sigma = (1 \ 3 \ 2)$ \\
   $\left(a,-\frac{2a}{3},\frac{a}{3}\right)$ & $-1,1,2$ & $\sigma = (1 \ 3)$ \\
   $\left(a,-\frac{3a}{2},\frac{a}{2}\right)$ & $-1,1,3$ & $\sigma = (1 \ 2 \ 3)$ \\
   $\left(a,-\frac{a}{2},\frac{3a}{2}\right)$ & $-1,1,6$ & $\sigma = (2 \ 3)$ \\
   $\left(a,-2a,3a\right)$ & & \\
   $\left(a,-3a,2a\right)$ & & $\sigma = (1 \ 2)$ \\
  \hline
   $\left(a,0,a\right)$ & & \\
   $\left(a,-a,0\right)$ & $-1,1,2$ & $\sigma = (1 \ 3 \ 2)$ \\
   $\left(0,b,-b\right)$ & & $\sigma = (1 \ 2 \ 3)$ \\
  \hline
\end{tabular} \nonumber
\end{center}
\caption{Systems  with integer Kowalevski exponents}
\end{table}

\vspace{3mm}

\begin{exa}
\no The periodic KM system (\cite{kac})  in three dimensions is the system
\bd \label{lotka3}
\dot x_i = \sum_{i=1}^3 a_{ij} x_i x_j, \ \ i=1,2,3,
\ed
\noindent where $A$ is the $3 \times 3$ skew-symmetric matrix
\begin{eqnarray*}
A=\left(%
\begin{array}{ccc}
  0 & -1 & 1 \\
  1 & 0 & -1 \\
  -1 & 1 & 0 \\
\end{array}%
\right).
\end{eqnarray*}
\noindent This system is a special case of the system
(\ref{lotka})  where
$\left(a,b,c\right)=\left(-1,1,-1\right)$. This is  Case 2  in Table 3. The Kowalevski exponents of this
system are $-1,1,3$. The system can be written in the Lax-pair
form $\dot L= \left[L,B\right]$, where
\begin{eqnarray*}
L = \left(%
\begin{array}{ccc}
  0 & x_1 & 1 \\
  1 & 0 & x_2 \\
  x_3 & 1 & 0 \\
\end{array}%
\right), \ \ \ B= \left(%
\begin{array}{ccc}
  0 & 0 & x_1x_2 \\
  x_2x_3 & 0 & 0 \\
  0 & x_1x_3 & 0 \\
\end{array}%
\right).
\end{eqnarray*}

\noindent We have the constants of motion $$H_k={\rm
trace}\left(L^k\right), \ \ k =1,2, \dots $$

\noindent The functions
\begin{eqnarray*}
\begin{array}{rcl}
  H_2 & = & x_1 + x_2 + x_3 \\
  H_3 & = & 1 + x_1 x_2 x_3 \\
\end{array}
\end{eqnarray*}

\noindent are independent constants of motion in involution with respect to the Poisson bracket
\begin{eqnarray*}
\pi= \left(%
\begin{array}{ccc}
  0 & -x_1x_2 & x_1x_3 \\
  x_1x_2 & 0 & -x_2x_3 \\
  -x_1x_3 & x_2x_3 & 0 \\
\end{array}%
\right).
\end{eqnarray*}
\noindent We note that the  positive Kowalevski exponents, $1$ and $3$, correspond to the
degrees of the constants of motion.
\end{exa}

\vspace{3mm}

\noindent We have to  point out  that all  Lotka-Volterra  systems  in three dimensions are integrable in the sense of
Liouville since there exist two constants of motion which  are
independent and in involution. The function $$H=x_1+x_2+x_3$$
\noindent is the Hamiltonian for these systems using the quadratic Poisson bracket
\begin{eqnarray*}
\pi= \left(%
\begin{array}{ccc}
  0 & ax_1x_2 & bx_1x_3 \\
  -ax_1x_2 & 0 & cx_2x_3 \\
  -bx_1x_3 & -cx_2x_3 & 0 \\
\end{array}%
\right).
\end{eqnarray*}
\noindent

The equations of motion can be written in Hamiltonian  form  \bd \dot x_i = \{x_i,H\}, \ \ \ i = 1,2,3 \ . \ed

\noindent

 The second constant of motion, independent of $H$ always exists. It is straightforward to check that the function
  \bd F=x_1^c x_2^{-b}x_3^a \ed   is always a Casimir.  Therefore the system is Liouville integrable for any value of $a, b, c$. This is not the case if $n \ge 4$.

\section{Free Parameters}

 We would like to classify the algebraic completely integrable
Lotka-Volterra equations in three dimensions. In order to use
Proposition \ref{pro} we have to assume
Laurent solutions of the form
\begin{equation} \label{laur}
x_i (t)= \frac{1}{t^{v_i}} \sum_{k=0}^{\infty} x_i^{(k)} t^k, \ \
\ \ i=1,2, \dots , n,
\end{equation}
 where $v_i$ are the components of the weight vector $v$ that
makes the vector field
$$
\dot x_i = f_i (x_1, \dots , x_n), \ \ \ \ i=1,2, \dots , n,
$$
\no to be  weight homogeneous. In our case  the weight vector is  $v=\left(1,1,1\right)$.

To make sure that our classification is complete  we must check that each system obtained by imposing these necessary
conditions is indeed  a.c.i.  This  means
that the Laurent series of the solutions $x_1, \ x_2$ and $x_3$
must have $n-1=2$ free parameters. Using the results  in
\cite{pol}, the free parameters appear in a finite number of steps
of calculation. The first thing to do is to substitute
(\ref{laur}) into equations (\ref{lotka}). After that we equate the coefficients of $t^k$.
We have already equated the coefficients of $t^{-v_i-1}$ by
solving the indicial equation to find $x_i^{(0)}$. Then we call
{\it Step m} ($m \in$ {\bf N}) when  we equate the coefficients of
$t^{-v_i-1+m}$ to find $x_i^{(m)}$. According to
\cite{pol} all the free parameters appear in the first $k_p$ {\it
Steps}, where $k_p$ is the largest (positive) Kowalevski exponent
of the system. The calculations are straightforward and we omit the details.
\begin{table}[h]
\begin{center}
\begin{tabular}{|c||c||c||c|}
  \hline
  % after \\: \hline or \cline{col1-col2} \cline{col3-col4} ...
  {\bf Vector} & {\bf Kowalevski} & {\bf Free parameters}& Degree of \\
  & {\bf exponents} & &  invariant \\
  \hline
  \cline{1-4}
   $\left(a,0,a\right)$  & $-1,1,2$ & $x_3^{(1)}, \ \ x_3^{(2)}$ & 2 \\
  \hline
   $\left(a,-a,a\right)$ & $-1,1,3$ & $x_3^{(1)}, \ \ x_3^{(3)}$ &3\\
  \hline
   $\left(a,-\frac{a}{2},\frac{a}{2}\right)$ & $-1,1,2$
   & $x_3^{(1)}, \ \ x_3^{(2)}$ & 4\\
   & $-1,1,4$ &  & \\
  \hline
   & $-1,1,2$ &  &    \\
   $\left(a,-2a,3a\right)$ & $-1,1,3$ & $x_3^{(1)}, \ \ x_3^{(2)}$ &6\\
   & $-1,1,6$ & & \\
    \hline
   & $-1,1,1$ &  &\\
   $\left(a,\frac{a}{\lambda},\frac{a}{\lambda}\right)$ &
   $-1,1,\lambda$ & $x_1^{(1)}, \ \ x_2^{(1)}$ & $\lambda$ \\
   & $-1,1,-\lambda$ &  &  \\
   \hline
   $\left(a,a+c,c\right)$ & $-1,1,0$ & $x_1^{(1)}, \ \ x_3^{(0)}$   & 0 \\
  \hline
\end{tabular}
\end{center}
\caption{Free parameters for the algebraic complete
integrability of each system}
\end{table}

\vspace{3mm}

\no All the systems that we have obtained turn out to be a.c.i. We summarize the results  in Table 3
where we display the 2 free parameters in each case. We note that the six Cases of Theorem 1 are non-isomorphic by examining the degree of the Casimir.

\section{Higher Order Poles}

In our classification, using the Kowalevski exponents, we assume that the order of the poles agrees with the components of the weight vector, in our case all equal to $1$.  We have to exclude the possibility of missing some cases due to solutions with higher order poles. We show that no such new cases appear.

 Suppose that the Laurent solution of the system is

\bea
\begin{array}{c}
  x_1 (t)= \frac{1}{t^{\nu_1}} \sum_{k=0}^{\infty} x_1^{(k)} t^k,
  \ {\rm with} \ x_1^{(0)} \neq 0, \\
  x_2 (t)= \frac{1}{t^{\nu_2}} \sum_{k=0}^{\infty} x_2^{(k)} t^k,
  \ {\rm with} \ x_2^{(0)} \neq 0, \\
  x_3 (t)= \frac{1}{t^{\nu_3}} \sum_{k=0}^{\infty} x_3^{(k)} t^k,
  \ {\rm with} \ x_3^{(0)} \neq 0. \\
\end{array}
\eea

 If $\nu_1,\nu_2,\nu_3 \leq 1$, then these systems have been already
investigated using Proposition \ref{pro}.  On the other hand, keeping in mind that
$H=x_1+x_2+x_3$ is always a constant of motion, we end-up with  the
following four  cases to consider:

\begin{itemize}
    \item[(i)] $\nu_1=\nu_2=\nu >1$ and $\nu_3<\nu$, or
    \item[(ii)] $\nu_1=\nu_3=\nu > 1$ and $\nu_2<\nu$, or
    \item[(iii)] $\nu_2=\nu_3=\nu > 1$ and $\nu_1<\nu$, or
    \item[(iv)] $\nu_1=\nu_2=\nu_3=\nu > 1$.
\end{itemize}
 Recall that   equations (\ref{lotka}) in three dimensions are:

\be \label{lot31} \dot x_1 = \ \ ax_1x_2+bx_1x_3, \ee \be
\label{lot32} \dot x_2=-ax_1x_2+cx_2x_3, \ee \be \label{lot33}
\dot x_3=-bx_1x_3-cx_2x_3. \ee
 We examine each of the four cases:

\begin{itemize}
    \item[($i$)] {\Large {\fbox{$\nu_1=\nu_2=\nu > 1$ and $\nu_3<\nu$}}}

    \vspace{3mm}

     Since $\nu_1=\nu_2=\nu$ and   using the fact that $H=x_1+x_2+x_3$ is a constant of motion we have that $$x_1^{(0)}=-x_2^{(0)}=\al\neq 0 \ .$$

    \vspace{3mm}
     We also note that $\nu+\nu_3<2\nu$ and $x_1^{(0)}x_2^{(0)} \neq 0$. Equating
    the coefficients of $t^{2\nu}$ of the LHS and RHS of ($\ref{lot31}$) or
    ($\ref{lot32}$), we are led to $a\ x_1^{(0)}x_2^{(0)} = 0$. Therefore $a=0$.

    \vspace{3mm}
     As we know that $\nu_3+1<\nu+\nu_3$, the coefficient of
    $t^{\nu+\nu_3}$ of the RHS of ($\ref{lot33}$) must be equal to
    zero. So $$x_3^{(0)}\left(-b x_1^{(0)} -c x_2^{(0)}\right)=0,$$

     but $x_3^{(0)} \neq 0$ and $x_2^{(0)}=-x_1^{(0)}\neq 0$;
    therefore $b=c$.

    If $b=0$, then from (\ref{lot31}) and
    (\ref{lot32}) we have that $$\dot x_1 = \dot x_2 =0\Longrightarrow x_1,
    \ x_2 {\rm \ are \ constant \ functions},$$

     that is a contradiction because $\nu_1=\nu_2=\nu>1$.

    \vspace{3mm}
     If $b$ and $c$ are non-zero, then the equations
    (\ref{lot31}) and (\ref{lot32}) become
    \be \label{lot34} \dot x_1=bx_1x_3, \ee
    \be \label{lot35} \dot x_2=bx_2x_3. \ee

     Using equation (\ref{lot31}) we obtain  \bd \nu+1=\nu+\nu_3
     \ed
     therefore $\nu_3=1$,
     since $x_1^{(0)}x_3^{(0)} \neq 0$ and $x_2^{(0)}x_3^{(0)} \neq 0$.

    It follows from  (\ref{lot34}) and (\ref{lot35})  that
    $$\frac{\ \dot x_1 \ }{x_1}=\frac{\ \dot x_2 \ }{x_2}=bx_3 \Longrightarrow
    x_1= \kappa x_2, \ \ \kappa \ {\rm is \ a \ constant}. $$

     However, we know that
    $$x_1^{(0)}= - x_2^{(0)} \Longrightarrow \kappa =-1 \Rightarrow
    x_1 =-x_2. $$
     Equation (\ref{lot33}) becomes
    $$\dot x_3 = -b (-x_2) x_3 - bx_2 x_3 = 0 \Longrightarrow x_3
    = c, \ c \ {\rm is \ a \ constant}.$$
    This is a contradiction since
    $\nu_3=1$ and $x_3^{(0)} \neq 0$.

    \vspace{3mm}

    \item[($ii$)] {\Large {\fbox{$\nu_1=\nu_3=\nu > 1$ and $\nu_2<\nu$}}}

    \vspace{3mm}
     It leads to a contradiction, as in  case ($i$).

    \vspace{3mm}
    \item[($iii$)] {\Large {\fbox{$\nu_2=\nu_3=\nu > 1$ and $\nu_1<\nu$}}}

    \vspace{3mm}
     It leads to a contradiction, as in  case ($i$).

    \vspace{3mm}
    \item[($iv$)] {\Large {\fbox{$\nu_1=\nu_2=\nu_3=\nu > 1$}}}

    \vspace{3mm}
     In this case, for $i=1,2,3$,
    $$x_i (t) = \frac{1}{t^\nu} \sum_{k=0}^{\infty} x_i^{(k)} t^k,$$
     we have that the degrees of the leading term of the LHS of the equations
    (\ref{lot31}), (\ref{lot32}) and (\ref{lot33}) are equal to $\nu+1$, but the
    degrees of the leading term RHS of these equations are equal
    to $2\nu$ and so the coefficients of $\frac{1}{t^{\nu+k}}$ of the RHS of
    these equations must be zero for $k=2,3, \dots , \nu$.

    The coefficients of $\frac{1}{t^{\nu+k}}$, $k=1,2, \dots ,
    \nu$, are given by the sums
    \begin{equation} \label{sik}
    S_{i,k}=\sum_{\la =0}^{\nu-k} x_i^{(\la)} u_{i,k}^{(\la)}, \
    {\rm for} \ i=1,2,3,
    \end{equation}
    where
    \begin{eqnarray*}
    \begin{array}{rcrlll}
      u_{1,k}^{(\la)}&=& ax_2^{(\nu-k- \la)} &+& bx_3^{(\nu-k- \la)}, \\
      u_{2,k}^{(\la)}&=& -ax_1^{(\nu-k- \la)} &+& cx_3^{(\nu-k- \la)}, \\
      u_{3,k}^{(\la)} &=& -bx_2^{(\nu-k- \la)} &-& cx_3^{(\nu-k- \la)}. \\
    \end{array}
    \end{eqnarray*}

     Note that
    \begin{equation} \label{note}
      u_{i,k}^{(\la)} = u_{i,j}^{(m)}, \ {\rm if} \ \ k+\la = j+m.
    \end{equation}

 In addition $$S_{i,k}=0, \ {\rm for} \ i=1,2,3 \ {\rm and} \
    k=2,3, \dots ,\nu.$$

     For $k=n$ sum (\ref{sik}) becomes
    $$S_{i,\nu}=x_i^{(0)} u_{i,\nu}^{(0)}=0 \Longrightarrow u_{i,\nu}^{(0)}=0$$
     since $x_i^{(0)} \neq 0$.

     For $k=\nu-1$ we have that
    \begin{eqnarray*}
    \begin{array}{rcl}
      S_{i,\nu-1}&=& x_i^{(0)} u_{i,\nu-1}^{(0)} + x_i^{(1)} u_{i,\nu-1}^{(1)}=0 \\
      (\ref{note}) &\Rightarrow& x_i^{(0)} u_{i,\nu-1}^{(0)} + x_i^{(1)}
      u_{i,\nu}^{(0)} = x_i^{(0)} u_{i,\nu-1}^{(0)}=0 \\
      &\Rightarrow& u_{i,\nu-1}^{(0)}=0 \ {\rm because} \ \ x_i^{(0)} \neq 0. \\
    \end{array}
    \end{eqnarray*}

     Let $m \in \{ 1,2, \dots , \nu-1 \}$ and assume that
    $u_{i,k}^{(0)}=0$ for $k>m$.

     For $k=m$ we have that
    $$S_{i,m} = \sum_{\la =0}^{\nu-m} x_i^{(\la)} u_{i,m}^{(\la)} = x_i^{(0)}
    u_{i,m}^{(0)} + \sum_{\la =1}^{\nu-m} x_i^{(\la)} u_{i,m}^{(\la)}$$
    $$ \ \ \ \ = x_i^{(0)} u_{i,m}^{(0)} + \sum_{\la =1}^{\nu-m} x_i^{(\la)}
    u_{i,m+ \la}^{(0)}=x_i^{(0)} u_{i,m}^{(0)}.$$

     Since  $S_{i,m}=0$ for $m>1$ and, since $x_i^{(0)} \neq
    0$, then $u_{i,m}^{(0)}=0$.

     Now we equate the coefficients of $\frac{1}{t^{\nu+1}}$ on
    both sides of the equations (\ref{lot31})-(\ref{lot33}) to obtain

    \bd S_{i,1}=x_i^{(0)} u_{i,1}^{(0)}=-\nu x_i^{(0)}\ . \ed

    Therefore, $    \nu + u_{i,1}^{(0)}=0 $.

     Therefore we have that
    \begin{eqnarray}
    \begin{array}{rcrlll}
      ax_2^{(\nu-1)}  &+& bx_3^{(\nu-1)} &=& -\nu, \\
      -ax_1^{(\nu-1)} &+& cx_3^{(\nu-1)} &=& -\nu, \\
      -bx_2^{(\nu-1)} &-& cx_3^{(\nu-1)} &=& -\nu. \\
    \end{array}
    \end{eqnarray}

     These simultaneous equations have solutions only if $$b=a+c.$$

     If $a=0 $,  Then $  b=c$ (obviously $b=c\neq 0$). Then the
    system is isomorphic to the following $(0,1,1)$ system:
    \bea \begin{array}{rcl}
      \dot x_1 & = & x_1x_3 \\
      \dot x_2 & = & x_2x_3 \\
      \dot x_3 & = & -x_1x_3-x_2x_3 \\
    \end{array} \nonumber \eea
     Equating the coefficients of $t^{-2\nu}$ ($\nu>1$) in the
    first and second equations we have that $$x_1^{(0)}x_3^{(0)}=x_2^{(0)}
    x_3^{(0)}=0 \ .$$
     This  is impossible because $x_i^{(0)} \neq 0$, for
    $i=1,2,3$.

    \vspace{3mm}

     The same happens if $bc =0$. So in the following
    calculations we assume that $abc \neq 0$.

    \vspace{3mm}

     We will show that there exists no such   solution with
    $\nu \ge 2$. Since $b=a+c$ and the function $H=x_1+x_2+x_3$ is
    a constant of motion, the Lotka-Volterra equations in three
    dimensions can be written in the form  \bea \label{odesys} \begin{array}{rcl}
      \dot x_1 & = & \ \ akx_1 - ax_1^2 +cx_1x_3, \\
      \dot x_2 & = & -\dot x_1-\dot x_3, \\
      \dot x_3 & = & -ckx_3 + cx_3^2 -a x_1x_3, \\
      \end{array}\eea \no where $k$ is the constant value of the
      function $H$. It is straightforward to see  that if $k \neq 0$, then the solution is
    $$ x_1=\frac{kC_1{\rm \bf
e}^{akt}}{C_1{\rm \bf e}^{akt}+a{\rm \bf e}^{-ckt}-C_2}, \
    x_3 = \frac{ka{\rm \bf
e}^{-ckt}}{C_1{\rm \bf e}^{akt}+a{\rm \bf e}^{-ckt}-C_2},$$ \be
    \label{solution}x_2 =k-x_1-x_3=-\frac{kC_2}{C_1{\rm \bf
e}^{akt}+a{\rm \bf e}^{-ckt}
    -C_2}.\ee \no Obviously $C_2 \neq 0$. The pole  $t_*$ satisfies $$C_1{\rm \bf
e}^{akt_*}+a{\rm \bf e}^{-ckt_*}-C_2=0\Rightarrow C_2=C_1{\rm \bf
e}^{akt_*}+a{\rm \bf e}^{-ckt_*} \neq 0$$ \no Hence using De l'
H$\hat{\rm o}$pital Rule we are led to the fact that
$$\lim_{t\rightarrow t_*} (t-t_*)x_2
(t) = \frac{C_2}{aC_1{\rm \bf e}^{akt_*}-ac{\rm \bf
e}^{-ckt_*}}.$$ \no Since the pole order is greater than 1, we
have that
$$\lim_{t\rightarrow t_*} (t-t_*)x_2 (t) = \infty .$$\no Therefore
$$C_1=c {\rm \bf e}^{-(a+c)kt}=c {\rm \bf e}^{-bkt}$$ \no The solution (\ref{solution})
    possesses only one arbitrary constant $k$, but we need
    $2$.

 Now if $k=0$ the solutions of (\ref{odesys}) are

     \bd  x_3 (t)=0, \ \
    x_1 (t) = \frac{1}{at +C_1} \ , \ed
     or \bd   x_1 (t)= \frac{C_1 -c}
    {a(C_1 t + C_2)}, \ \ x_3 (t)=- \frac{1}{C_1 t + C_2} \ . \ed

Both solutions lead  to a contradiction since the pole     order of $x_1$ and $x_3$ is assumed to be  greater than 1.

     Therefore the case $\nu_1=\nu_2=\nu_3=\nu>1$ does not  give us
    any new  algebraically  integrable systems. The conclusion is  that the case $b=a+c$ is algebraically
     integrable  only  when $\nu_1=\nu_2=\nu_3=1$.
\end{itemize}

\smallskip
\noindent
{\bf Acknowledgments} We thank Tassos Bountis for introducing us to this area of research during a short course he gave at the University of Cyprus. We thank Pol Vanhaecke for useful discussions and for pointing out the need to exclude higher order poles.


\begin{thebibliography}{label}


\bibitem{adler} M.  Adler, P. van Moerbeke,  Kowalewski's
asymptotic method, Kac-Moody Lie algebras and regularization,  Comm.
Math. Phys. 83 (1982)  83--106.

\bibitem{pol} M. Adler, P. Van Moerbeke, P. Vanhaecke,   Algebraic integrability, Painlev\'e geometry and Lie algebras,  ( Ergebnisse der Mathematik und ihrer grenzgebiete ) 47
   Springer-Verlag, Berlin Heidelberg, 2004.





\bibitem{adrian} K. Andriopoulos, P. G. L.  Leach,  An interpretation
of the presence of both positive and negative nongeneric resonances
in the singularity analysis,  Phys. Let. A. 359 (2006) 199--203.


\bibitem{ars1} M. J.  Ablowitz,  A.  Ramani,  H.  Segur, A connection between nonlinear evolution equations and ordinary differential equations of $P$-type I,  J. Math. Phys. 21 (1980) 715--721.



\bibitem{ars2} M. J.  Ablowitz,  A.  Ramani,  H.  Segur, A connection between nonlinear evolution equations and ordinary differential equations of $P$-type II,  J. Math. Phys. 21 (1980)  1006--1015.


\bibitem{bor} A. V. Borisov, A. V.  Tsygvintsev,  Kowalewski exponents and integrable systems of classic dynamics, I, II, Regul.  Chaotic. Dyn. 1  (1996) 15--37.

\bibitem{boun1} T. C.  Bountis, M.  Bier, J.  Hijmans,  On the integrability of some generalized Lotka-Volterra systems,  Phys. Lett. A  97  (1983)  11--14.


\bibitem{boun2} T. C. Bountis,  What can complex time tell us about real
time dynamics? Internat. J. Bifur. Chaos Appl. Sci. Engrg. 2
(1992) 217--232.

\bibitem{cair} L. Cairo,  J.  Llibre,  Darboux integrability for 3D
Lotka-Volterra systems,  J. Phys. A: Math. Gen. 33 (2000)
2395-2406.



\bibitem{cron} C. Cronstr$\ddot{\rm o}$m,  M.  Noga,
Multi-Hamiltonian Structure of  Lotka-Volterra and quantum Volterra models,
Nucl. Phys. B 445 (1995)  501--515.


\bibitem{damianou1} P. A. Damianou, The Volterra model and its relation
to the Toda lattice,  Phys. Lett. A 155 (1991) 126--132.




\bibitem{furta} S. D. Furta,  On non-integrability of general systems of
differential equations,  Z. Angew. Math. Phys. 47 (1996) 112--131.

\bibitem{gor1} A. Goriely,  A brief history of Kovalevskaya exponents and modern developments,   Regul. Chaotic Dyn. 5 (2000) 3--15.




\bibitem{gor2} A. Goriely, Integrability and Nonintegrability of
Dynamical Systems, Advanced Series in Nonlinear Dynamics 19, World
Scientific Publishing Company, Singapore,  2001.


\bibitem{kac} M. Kac, P.  van Moerbeke,  On an explicit soluble system
of nonlinear differential equations related to certain Toda
lattices,  Adv. Math. 16 (1975) 160--169.


\bibitem{kern} E. H.  Kerner,  Comment on Hamiltonian structures for
the $n$-dimensional Lotka-Volterra equations,  J. Math. Phys. 38
(1997) 1218--1223.

\bibitem{kozlov} V. V.  Kozlov,  Tensor invariants of quasihomogeneous systems of differential equations and the
asymptotic method of Kovalevskaya-Lyapunov,  Mat. Zametki 51 (1992)  46--52.

\bibitem{moul}J.  Moulin-Ollagnier,  Polynomial first integrals of the
Lotka - Volterra system,  Bull. Sci. Math. 121 (1997) 463--476.

\bibitem{sade} S. T. Sadetov,  Resonances on the Kovalevskaya exponent.
(Russian) Mat. Zametki 54 (1993) 152--154; translation in
Math. Notes 54 (1993), no. 3-4, (1994) 1081--1082.

\bibitem{tsyg}  A. Tsygvintsev, On the existence of polynomial first integrals of quadratic homogeneous systems of ordinary differential equations, J. Phys. A: Math. Gen. 34 (2001) 2185--2193.

\bibitem{yosh} H. Yoshida,  Necessary condition for the existence of
algebraic first integrals. I. Kowalevski's exponents,  Celestial
Mech. 31 (1983) 363--379.


\end{thebibliography}
\end{document}